\newtheorem{theorem}{Theorem}[section]
\newtheorem{prop}[theorem]{Proposition}
\newtheorem{lemma}[theorem]{Lemma}
\newtheorem{corol}[theorem]{Corollary}
\theoremstyle{definition}
\newtheorem{defin}[theorem]{Definition}
\theoremstyle{remark}
\newtheorem{remk}[theorem]{Remark}
\numberwithin{equation}{section}
\title[Representations of nodal algebras]{Representations of nodal algebras of type $\rA$}
\author{Yuriy A. Drozd}
\author{Vasyl V. Zembyk}
\address{Institute of Mathematics, National Academy of Sciences of Ukraine,
01601 Kyiv, Ukraine}
\email{y.a.drozd@gmail.com, drozd@imath.kiev.ua}
\urladdr{www.imath.kiev.ua/$\sim$drozd}
\email{vaszem@rambler.ru}
\keywords{representations of finite dimensional algebras, nodal algebras, gentle algebras,
skewed-gentle algebras}
\subjclass{16G60,\,16G10}
\def\rA{\mathrm A}	\def\nR{\mathrm r}
\def\bA{\mathbf A}	\def\bH{\mathbf H}
\def\bB{\mathbf B}	\def\bS{\mathbf S}
\def\bF{\mathbf F}	\def\bG{\mathbf G}
\def\bC{\mathbf C}
\def\aK{\mathbbm k}	\def\fG{\mathbf g}
\def\fF{\mathbf f}	\def\bE{\mathbf E}
\def\dS{\mathfrak S}	\def\dC{\mathfrak C}
\def\dL{\mathfrak L}	\def\dM{\mathfrak M}
\def\kC{\mathcal C}
\def\Ga{\Gamma}
\def\eps{\varepsilon}
\def\al{\alpha}		\def\be{\beta}
\def\la{\lambda}	\def\ga{\gamma}
\def\oA{\bar\bA}
\def\oB{\bar\bB}
\def\oH{\bar\bH}
\def\tS{\widetilde\bS}
\def\tH{\widetilde\bH}
\def\tA{\widetilde\rA}
\def\tB{\widetilde B}
\def\oM{\bar M}
\def\tF{\tilde f}
\def\lb{\textup{(}} 	\def\rb{\textup{)}}
\def\sbe{\subseteq}	\def\sb{\subset}
\def\spe{\supseteq}	
\def\bop{\bigoplus}	\def\+{\oplus}
\def\*{\otimes}		\def\xx{\times}
\def\={\setminus}	\def\ito{\stackrel\sim\to}
\def\bap{\bigcap}
\def\ol{\overline}	\def\ti{\tilde}
\def\str{\stackrel}
\def\md{\mbox{-}\mathrm{mod}}
\def\rmd{\mathrm{mod}\mbox{-}}
\def\ccdot{\boldsymbol{\cdot}}
\def\id{\mathrm{Id}}
\def\im{\mathop\mathrm{Im}\nolimits}
\def\hom{\mathop\mathrm{Hom}\nolimits}
\def\Ker{\mathop\mathrm{Ker}\nolimits}
\def\End{\mathop\mathrm{End}\nolimits}
\def\len{\mathop\mathrm{length}\nolimits}
\def\Ar{\mathop\mathrm{Ar}}
\def\rad{\mathop\mathrm{rad}\nolimits}
\def\Mat{\mathop\mathrm{Mat}\nolimits}
\def\row#1#2{\left( #1_1 , #1_2 , \dots , #1_{#2} \right)}
\def\set#1{\left\{\,#1\,\right\}}
\def\setsuch#1#2{\left\{\,#1\mid #2\,\right\}}
\def\lst#1#2{ #1_1 , #1_2 , \dots , #1_{#2} }
\def\lsto#1#2{ #1_0 , #1_1 , \dots , #1_{#2} }
\def\gnr#1{\langle\,#1\,\rangle}
\def\mtr#1{\begin{pmatrix}#1\end{pmatrix}}
\def\fdd{finite dimensional}
\def\iff{if and only if }
\def\oc{one-to-one correspondence}
\begin{document}
\maketitle

\begin{abstract}
 We define nodal \fdd\ algebras and describe their structure over an algebraically closed
 field. For a special class of such algebras (\emph{type} $\rA$) we find a criterion of
 tameness.
\end{abstract}

\tableofcontents

\section*{Introduction}

 Nodal (in\fdd) algebras first appeared (without this name) in the paper \cite{dr} as
 \emph{pure noetherian}%
\footnote{\, Recall that \emph{pure noetherian} means noetherian without minimal submodules.}
 algebras that are tame with respect to the classification of finite length modules. In \cite{bd1}
 their derived categories of modules were described showing that such algebras are also derived tame.
 Voloshyn \cite{vol} described their structure.
 The definition of nodal algebras can easily be applied to \fdd\ algebras too. The simplest examples
 show that in \fdd\ case the above mentioned results are no more true: most nodal algebras are wild. 
 It is not so strange, since they are obtained from hereditary algebras, most of which are also wild,
 in contrast to pure noetherian case, where the only hereditary algebras are those of type $\tA$.
 Moreover, even if we start from hereditary algebras of type $\rA$, we often obtain wild nodal 
 algebras. So the natural question arise, which nodal algebras are indeed tame, at least if we
 start from a hereditary algebra of type $\rA$ or $\tA$. In this paper we give an answer to this 
 question (Theorem~\ref{main}). 
 
 The paper is organized as follows. In Section 1 we give the definition of nodal algebras and
 their description when the base field is algebraically closed. This description is alike that
 of \cite{vol}. Namely, a nodal algebra is obtained from a hereditary one by two operations
 called \emph{gluing} and \emph{blowing up}. Equivalently, it can be given by a quiver and a
 certain symmetric relation on its vertices. In Section 2 we consider a special sort of gluings
 which do not imply representation types. In Section 3 \emph{gentle} and \emph{skewed-gentle} 
 nodal algebras are described. Section 4 is devoted to a class of nodal algebras called 
 \emph{exceptional}. We determine their representation types. At last, in Section 5 we summarize
 the obtained results and determine representation types of all nodal algebras of type $\rA$.
 
\section{Nodal algebras}
\label{sec1} 

 We fix an algebraically closed field $\aK$ and consider algebras over $\aK$. 
 Moreover, if converse is not mentioned, all algebras are supposed to be finite dimensional.
 For such an algebra $\bA$ we denote by $\bA\md$ the category of (left) finitely generated
 $\bA$-modules. If an algebra $\bA$ is \emph{basic} (i.e. $\bA/\rad\bA
 \simeq\aK^s$), it can be given by a \emph{quiver} (oriented graph) and relations (see
 \cite{ars} or \cite{dk}). Namely, for a quiver $\Ga$ we denote by $\aK\Ga$ the 
 \emph{algebra of paths} of $\Ga$ and $J_\Ga$ be its ideal generated by all arrows. 
 Then every basic algebra is isomorphic to $\aK\Ga/I$, where $\Ga$ is a quiver and $I$ is an 
 ideal of $\aK\Ga$ such that $J_\Ga^2\spe I\spe J_\Ga^k$ for some $k$. Moreover, the
 quiver $\Ga$ is uniquely defined; it is called the \emph{quiver of the algebra} $\bA$.
 For a vertex $i$ of this quiver we denote by $\Ar(i)$ the set of arrows incident to $i$.
 Under this presentation $\rad\bA=J_\Ga/I$, so $\bA/\rad\bA$ can be identified with the
 vector space generated by the ``empty paths'' $\eps_i$, where $i$ runs the vertices of $\Ga$. 
 Note that $1=\sum_i\eps_i$ is a decomposition of the unit of $\bA$ into a sum of primitive 
 orthogonal idempotents. Hence simple $\bA$-modules, as well as 
 indecomposable projective $\bA$-modules are in \oc\ with the vertices of the quiver $\Ga$. 
 We denote by $\oA_i$ the simple module corresponding to the vertex $i$ and by $\bA_i=\eps_i\bA$ 
 the \emph{right} projective $\bA$-module corresponding to this vertex. We also write $i=\al^+$
 ($i=\al^-$) if the arrow $\al$ ends (respectively starts) at the vertex $i$.
 Usually the ideal $I$ is given by a set of generators $R$ which is then called the
 \emph{relations of the algebra} $\bA$. Certainly, the set of relations (even a minimal one) 
 is far from being unique. 
  An arbitrary algebra can be given by a quiver $\Ga$ with relations and \emph{multiplicities} 
  $m_i$ of the vertices $i\in\Ga$. 
 Namely, it is isomorphic to $\End_\bA P$, where $\bA$ is the basic algebra of $\bA$
 and $P=\bop_im_i\bA_i$. (We denote by $mM$ the direct sum of $m$ copies of module $M$.)
  Recall also that path algebras of quivers without (oriented) cycles
 are just all \emph{hereditary} basic algebras (up to isomorphism) \cite{ars,dk}. 
 
\begin{defin}\label{nodal} 
 A (\fdd) algebra $\bA$ is said to be \emph{nodal} if there is a hereditary algebra $\bH$
 such that 
 \begin{enumerate}
 \item $\rad\bH\sb\bA\sbe\bH$,
 \item $\rad\bA=\rad\bH$,
 \item $\len_\bA(\bH\*_\bA U)\le2$ for each simple $\bA$-module $U$.
 \end{enumerate}
 We say that the nodal algebra $\bA$ \emph{is related to the hereditary algebra} $\bH$.
\end{defin}

\begin{remk}\label{remark} 
 From the description of nodal algebras it follows
 that the condition (3) may be replaced by the opposite one:
 \begin{enumerate}
 \item[($3'$)]  $\len_\bA(U\*_\bA\bH)\le2$ for each simple right $\bA$-module $U$
 \end{enumerate} 
 (see Corollary~\ref{opposite} below).
\end{remk}

\begin{prop}\label{morita} 
 If an algebra $\bA'$ is Morita equivalent to a nodal algebra $\bA$ related to a hereditary
 algebra $\bH$, then $\bA'$ is a nodal algebra related to a hereditary algebra $\bH'$ that is
 Morita equivalent to $\bH$.
\end{prop}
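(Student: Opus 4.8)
The plan is to realise an arbitrary Morita equivalence as a composite of two standard operations and to verify conditions (1)--(3) of Definition~\ref{nodal} for each. Recall that $\bA'$ is Morita equivalent to $\bA$ exactly when $\bA'\cong\End_\bA(P)$ for a finitely generated projective generator $P$ of $\bA\md$. Writing $P$ as a direct summand of $\bA^n$, one obtains $\bA'\cong e\,M_n(\bA)\,e$ for an idempotent $e\in M_n(\bA)$, and $e$ is \emph{full}, i.e. $M_n(\bA)\,e\,M_n(\bA)=M_n(\bA)$, precisely because $P$ is a generator. Thus it suffices to treat separately the passage $\bA\leadsto M_n(\bA)$ and the passage $\bC\leadsto e\bC e$ with $e$ a full idempotent, since every Morita equivalence is their composite.

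First I would take $\bH'=M_n(\bH)$ in the matrix case. Conditions (1) and (2) are immediate: $M_n(\bA)\sbe M_n(\bH)$, the matrix algebra $M_n(\bH)$ is hereditary because it is Morita equivalent to the hereditary algebra $\bH$ (heredity, being global dimension $\le1$, is Morita invariant), and $\rad M_n(\bA)=M_n(\rad\bA)=M_n(\rad\bH)=\rad M_n(\bH)$. For (3) I would use the canonical isomorphism $M_n(\bH)\cong M_n(\bA)\*_\bA\bH$, which shows that the equivalence $\bA\md\to M_n(\bA)\md$, $W\mapsto W^n$, intertwines $\bH\*_\bA-$ with $M_n(\bH)\*_{M_n(\bA)}-$; as this equivalence preserves length, $\len_{M_n(\bA)}\!\bigl(M_n(\bH)\*_{M_n(\bA)}U^n\bigr)=\len_\bA(\bH\*_\bA U)\le2$ for every simple module.

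For the corner case let $\bC$ be nodal related to a hereditary algebra $\bH_0$ and let $e\in\bC$ be full. Since $1\in\bC\sbe\bH_0$, fullness of $e$ in $\bC$ gives $\bH_0\,e\,\bH_0\spe\bC\,e\,\bC=\bC\ni1$, so $e$ is full in $\bH_0$ too; hence $\bH':=e\bH_0e$ is Morita equivalent to $\bH_0$ and therefore hereditary. The inclusion $e\bC e\sbe e\bH_0e$ is clear, and from the identity $\rad(eRe)=e(\rad R)e$ we get $\rad(e\bC e)=e(\rad\bC)e=e(\rad\bH_0)e=\rad(e\bH_0e)$, giving (1) and (2). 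The step I expect to be the real obstacle is (3), and here I would argue through the Morita functors $F_0=e(-)\colon\bC\md\to e\bC e\md$, $F_1=e(-)\colon\bH_0\md\to e\bH_0e\md$ and their quasi-inverses $G_0,G_1$. A short computation with associativity of $\*$ and $e\bH_0\*_{\bH_0}\bH_0e\cong e\bH_0e$ shows $\bH'\*_{e\bC e}-\cong F_1\circ(\bH_0\*_\bC-)\circ G_0$, so for a simple $e\bC e$-module $U'$ with $U:=G_0(U')$ one has $\bH'\*_{e\bC e}U'\cong e(\bH_0\*_\bC U)$. Restricting this $e\bH_0e$-module to $e\bC e$ returns exactly $F_0\bigl((\bH_0\*_\bC U)|_\bC\bigr)$, and since $F_0$ preserves length we conclude $\len_{e\bC e}\!\bigl(\bH'\*_{e\bC e}U'\bigr)=\len_\bC(\bH_0\*_\bC U)\le2$. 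Applying this with $\bC=M_n(\bA)$ and $\bH_0=M_n(\bH)$ produces the required hereditary algebra $\bH'=e\,M_n(\bH)\,e$, Morita equivalent to $\bH$, to which $\bA'$ is nodal related.
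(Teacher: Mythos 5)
Your argument is correct, but it takes a genuinely different route from the paper. The paper works with the Morita equivalence in one piece: it takes a projective generator $P$ of $\rmd\bA$ with $\bA'\simeq\End_\bA P$, sets $P'=P\*_\bA\bH$ and $\bH'=\End_\bH P'$, and verifies conditions (1)--(3) of Definition~\ref{nodal} by Hom--tensor calculus, the key computations being $\rad\End_\bH P'=\hom_\bA(P,PJ)=\rad\End_\bA P$ and $\bH'\*_{\bA'}P\simeq P\*_\bA\bH$, which yields $\bH'\*_{\bA'}U'\simeq P\*_\bA(\bH\*_\bA U)$ and hence the length bound. You instead invoke the structure theorem factoring any Morita equivalence through $\bA\leadsto M_n(\bA)\leadsto e\,M_n(\bA)\,e$ with $e$ full, and check the three axioms separately for the matrix step and for the corner step. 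The two constructions produce isomorphic hereditary companions ($e\,M_n(\bH)\,e\simeq\End_\bH(P\*_\bA\bH)$), so the difference is purely methodological: your version is more hands-on and keeps each verification elementary (the identities $\rad M_n(\bA)=M_n(\rad\bA)$ and $\rad(eRe)=e(\rad R)e$, plus the observation that fullness of $e$ in $\bC$ propagates to $\bH_0$ via $\bH_0e\bH_0\spe\bC e\bC=\bC\ni1$), at the cost of doing everything twice and relying on the decomposition theorem; the paper's version is uniform and shorter once the bimodule identities are accepted. One point worth making explicit in your corner-step length argument is that $e\bH_0\*_\bC\bC e\simeq e\bH_0e$ because $\bC e$ is a direct summand of $\bC$ as a left $\bC$-module, so that $\bH'\*_{e\bC e}U'\simeq e(\bH_0\*_\bC U)$ really is the image of $\bH_0\*_\bC U$ under the length-preserving equivalence $e(-)$; with that noted, the proof is complete.
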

\begin{proof}
 Denote $J=\rad\bH=\rad\bA$. 
 Let $P$ be a projective generator of the category $\rmd\bA$ of right $\bA$-modules
 such that $\bA'\simeq\End_\bA P$. Then also 
 $\bA\simeq \End_{\bA'}P\simeq P^\vee\*_{\bA'}P$, where 
 $P^\vee=\hom_{\bA'}(P,\bA')\simeq \hom_\bA(P,\bA)$.
 Let $P'=P\*_\bA\bH$. Then $P'$ is a projective generator of the  
 category $\rmd\bH$. Set $\bH'=\End_\bH P'$. Note that $\hom_\bH(P',M)\simeq\hom_\bA(P,M)$ 
 for every right $\bH$-module $M$. In particular, $\End_\bH P'\simeq\hom_\bA(P,P')$. 
 Hence, the natural map $\bA'\to\bH'$ is a monomorphism. Moreover, since $P'J=PJ$,
\begin{multline*}
\rad\End_\bH P'=\hom_\bH(P',P'J)\simeq\\
 \simeq \hom_\bA(P,P'J)=\hom_\bA(P,PJ)=\rad\End_\bA P
\end{multline*}
(see \cite[Chapter III, Exercise 6]{dk}). Thus $\rad\bA'=\rad\bH'\sb\bA'\sbe\bH'$.
 Every simple $\bA'$-module is isomorphic to $U'=P\*_\bA U$ for some simple $\bA$-module
 $U$. Therefore
 \begin{align*}
  \bH'\*_{\bA'}U'&=\bH'\*_{\bA'}(P\*_\bA U)\simeq (\bH'\*_{\bA'}P)\*_\bA U \simeq\\
   &\simeq (P\*_\bA\bH)\*_\bA U\simeq P\*_\bA(\bH\*_\bA U),\\
  \intertext{since}
  \bH'\*_{\bA'}P&\simeq \hom_\bA(P,P\*_\bA\bH)\*_{\bA'}P \simeq
    ((P\*_\bA\bH)\*_\bA P^\vee)\*_{\bA'}P \simeq\\
   &\simeq (P\*_\bA\bH)\*_\bA (P^\vee\*_{\bA'}P) \simeq P\*_\bA(\bH\*_\bA\bA)
   \simeq P\*_\bA\bH.
 \end{align*}
 Hence $\len_{\bA'}(\bH'\*_{\bA'}U')=\len_\bA(\bH\*_\bA U)\le2$, so $\bA'$ is nodal.
 \end{proof}
 
 This proposition allows to consider only \emph{basic} nodal algebras $\bA$, i.e. such
 that $\bA/\rad\bA\simeq\aK^m$ for some $m$. We are going to present a construction that gives all
 basic nodal algebras. 
  
 \begin{defin}\label{gluing} 
 Let $\bB$ be a basic algebra, $\oB=\bB/\rad\bB=\bop_{i=1}^m\oB_i$, where $\oB_i\simeq\aK$
 are simple $\bB$-modules. 
 \begin{enumerate}
 \item Fix two indices $i,j$.
  Let $\oA$ be the subalgebra of $\oB$ consisting of all $m$-tuples $\row\la m$
 such that $\la_i=\la_j$, $\bA$ be the preimage of $\oA$ in $\bB$. We say that $\bA$ 
 \emph{is obtained from $\bB$ by gluing the components $\oB_i$ and $\oB_j$} (or the
 corresponding vertices of the quiver of $\bB$).
 
 \item Fix an index $i$. Let $P=2\bB_i\+\bop_{k\ne i}\bB_k$, $\bB'=\End_\bB P$, 
 $\oB'=\oB/\rad\oB=\prod_{k=1}^m\oB'_i$, where $\oB'_i\simeq\Mat(2,\aK)$ and 
 $\oB'_k\simeq\aK$ for $k\ne i$. Let $\oA'$ be the subalgebra of $\oB'$ consisting of
 all $m$-tuples $\row bm$ such that $b_i$ is a diagonal matrix, and $\bA$ be
 the preimage of $\oA$ in $\bB'$. We say that $\oA$ \emph{is obtained from $\bB$ by
 blowing up the component $\oB_i$} (or the corresponding vertex of the quiver $\bB$).
  \end{enumerate}
 \end{defin}
 
 This definition immediately implies the following properties.
 
 \begin{prop}\label{properties} 
  We keep the notations of Definition~\ref{gluing}.
  \begin{enumerate}
  \item If $\bA$ is obtained from $\bB$ by gluing components $\oB_i$ and $\oB_j$,
  then it is basic and $\bA/\rad\bA=\oA_{ij}\xx\prod_{k\notin\{i,j\}}\oB_k$, where
  $\oA_{ij}=\setsuch{(\la,\la)}{\la\in\aK}\sb\oB_i\xx\oB_j$. Moreover, $\rad\bA=\rad\bB$
  and $\bB'\*_\bA\oA_{ij}\simeq\oB_i\xx\oB_j$.
  
  \item If $\bA$ is obtained from $\bB$ by blowing up a component $\oB_i$,
  then it is basic and $\bA/\rad\bA=\oA_{i1}\xx\oA_{i2}\xx\prod_{k\ne i}\oB_k$, where
  $\oA_{is}=\setsuch{\la e_{ss}}{\la\in\aK}$ and $e_{ss}\ (s\in\{1,2\})$ denote the
  diagonal matrix units in $\oB_i\simeq\Mat(2,\aK)$. Moreover, $\rad\bA=\rad\bB'$ and
  $\bB\*_\bA\oA_{is}\simeq V$, where $V$ is the simple $\oB'_i$-module.
  \end{enumerate}
  
 \emph{We call the component $\oA_{ij}$ in the former case and the components $\oA_{is}$ in the 
  latter case \emph{the new components} of $\bA$. We also identify all other simple components
  of $\oA$ with those of $\oB$.} 
 \end{prop}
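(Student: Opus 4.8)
The plan is to handle both constructions uniformly, exploiting that in each case $\bA$ is the full preimage of a subalgebra of a semisimple quotient. Write $\pi$ for the canonical projection of the larger algebra onto its semisimple quotient: $\pi\colon\bB\to\bB/\rad\bB=\oB$ for the gluing, and $\pi\colon\bB'\to\bB'/\rad\bB'=\oB'$ for the blow-up. By definition $\bA=\pi^{-1}(\oA)$ (resp. $\pi^{-1}(\oA')$). Since $0$ belongs to $\oA$, the kernel $\rad\bB$ (resp. $\rad\bB'$) is contained in $\bA$; thus $\rad\bB\sbe\bA\sbe\bB$ and $\bA/\rad\bB\simeq\oA$ (resp. the primed analogue).

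First I would pin down $\rad\bA$ and deduce that $\bA$ is basic with the stated semisimple quotient. On one side, $\rad\bB$ is a nilpotent two-sided ideal of $\bA$: it is nilpotent already in $\bB$, it lies in $\bA$, and $\bA\,\rad\bB\sbe\rad\bB$, $\rad\bB\,\bA\sbe\rad\bB$ because $\rad\bB$ is a two-sided ideal of $\bB\spe\bA$; hence $\rad\bB\sbe\rad\bA$. On the other side, $\oA$ is a product of copies of $\aK$ (the diagonal $\aK^{m-1}\sb\aK^m$ after gluing; $\aK\xx\aK\xx\prod_{k\ne i}\oB_k$ after the blow-up, since the diagonal $2\xx2$ matrices form $\aK\xx\aK$), so $\bA/\rad\bB$ is semisimple and $\rad\bA\sbe\rad\bB$. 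Therefore $\rad\bA=\rad\bB$ (resp. $\rad\bB'$), the quotient $\bA/\rad\bA$ equals the asserted product of fields, and $\bA$ is basic; the new components are the simple summands of $\oA$ (resp. $\oA'$) cut out by the glued idempotent, resp. by the two diagonal matrix units.

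For the two ``Moreover'' isomorphisms I would run a single computation. Let $\bH$ denote the larger algebra ($\bB$ for gluing, $\bB'$ for the blow-up), $J=\rad\bH=\rad\bA$, and $\oH=\bH/J$. Choose the primitive idempotent $e\in\bA$ affording the relevant new simple module $U$: namely $e=e_i+e_j$ for the gluing and $e=e_{ss}$ for the blow-up. In each case the image $\pi(e)$ lies in $\oA$ (resp. $\oA'$) and is a primitive idempotent there, so $e\in\bA$ and $e$ is primitive in $\bA$. Then $U\simeq\bA e/Je$, and applying the right exact functor $\bH\*_\bA(-)$ to $0\to Je\to\bA e\to U\to0$, together with $\bH\*_\bA\bA e\simeq\bH e$ and $\bH J=J$, yields $\bH\*_\bA U\simeq\bH e/\bH Je=\bH e/Je\simeq\oH\,\pi(e)$. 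It remains to read off $\oH\,\pi(e)$: for the gluing $\oB(e_i+e_j)=\oB_i\xx\oB_j$, and for the blow-up $\oB'e_{ss}=\Mat(2,\aK)e_{ss}=V$, one column of the matrix algebra, i.e. the simple $\oB'_i$-module. Restricting these back to $\bA$ gives $\bA$-length $2$ in both cases, in accordance with Definition~\ref{nodal}(3).

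The only step needing genuine care — and hence the ``main obstacle'', slight as it is — is the idempotent bookkeeping in the last paragraph: verifying that $e_i+e_j$ and $e_{ss}$ really lie in $\bA$ and are primitive there, and that the image of $\bH\*_\bA Je$ in $\bH e$ is exactly $Je$, which rests on $J$ being a two-sided ideal of $\bH$ so that $\bH J=J$. Once the preimage description and the identity $\rad\bA=\rad\bB$ (resp. $\rad\bB'$) are secured, the remainder is formal.
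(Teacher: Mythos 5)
Your proof is correct. The paper gives no argument at all for this proposition (it is introduced with ``This definition immediately implies the following properties''), and your writeup is precisely the routine verification being left implicit: identifying $\rad\bA$ with $\rad\bB$ (resp.\ $\rad\bB'$) via nilpotence plus semisimplicity of $\bA/\rad\bB$, and computing $\bH\*_\bA U\simeq \bH e/Je\simeq\oH\pi(e)$ for the lifted primitive idempotent $e$. You also correctly read through the paper's typos ($\bB'$ vs.\ $\bB$ in the two ``Moreover'' clauses, and $\oA$ vs.\ $\oA'$), and your final length computation confirms condition (3) of Definition~\ref{nodal} as intended.
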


  \begin{prop}\label{relations} 
  Under the notations of Definition~\ref{gluing} suppose that the algebra $\bB$ is given
  by a quiver $\Ga$ with a set of relations $R$. 
  \begin{enumerate}
  \item Let $\bA$ be obtained from $\bB$ by gluing the components corresponding
  to vertices $i$ and $j$. Then the quiver of $\bA$ is obtained from $\Ga$ by identifying
  the vertices $i$ and $j$, while the set of relations for $\bA$ is $R\cup R'$, where
  $R'$ is the set of all products $\al\be$, where $\al$ starts at $i$ \lb or at $j$\rb\ 
  and $\be$ ends at $j$ \lb respectively, at $i$\rb.
  
  \item  Let $\bA$ be obtained from $\bB$ by blowing up the component corresponding to
  a vertex $i$ and there are no loops at this vertex.\!%
  \footnote{\,One can modify the proposed procedure to include such loops, but this modification
  looks rather cumbersome and we do not need it.}
    Then the quiver of $\bA$ and the 
  set of relations for $\bA$ are obtained as follows:
   \begin{itemize}
   \item replace the vertex $i$ by two vertices $i'$ and $i''$;
   \item replace every arrow $\al:j\to i$ by two arrows $\al':j\to i'$ and $\al'':j\to i''$;
   \item replace every arrow $\be:i\to j$ by two arrows $\be':i'\to j$ and $\be'':i''\to j$;   
   \item replace every relation $\nR$ containing arrows from $\Ar(i)$ by two relations
   $\nR'$ and $\nR''$, where $\nR'$ \lb$\nR''$\rb\ is obtained from $\nR$ by replacing each arrow 
   $\al\in \Ar(i)$ by $\al'$ \lb respectively, by $\al''$\rb;
   \item  keep all other relations;
   \item for every pair of arrows $\al$ starting at $i$ and $\be$ ending at $i$ add a relation
   $\al'\be'=\al''\be''$.
   \end{itemize}
  \end{enumerate}
  \end{prop}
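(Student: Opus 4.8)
The plan is to treat both parts by the same two‑step pattern: first read off the quiver of $\bA$ from its radical, then pin down the relations by a dimension count against an explicit basis. Throughout write $\pi\colon\aK\Ga_\bA\to\bA$ for the canonical surjection, where $\Ga_\bA$ is the quiver whose vertices are the primitive idempotents of $\bA$, as determined by $\bA/\rad\bA$ in Proposition~\ref{properties}; proving the statement amounts to identifying $\ker\pi$. The vertices of $\Ga_\bA$ are forced by Proposition~\ref{properties} ($i,j$ merge in the gluing case, $i$ splits into $i',i''$ in the blow‑up case), and the arrows form a basis of $\rad\bA/\rad^2\bA$, which by Proposition~\ref{properties} equals $\rad\bB/\rad^2\bB$ (resp.\ $\rad\bB'/\rad^2\bB'$), decomposed by the primitive idempotents as $e_a(\rad/\rad^2)e_b$. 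Merging idempotents merges arrow endpoints (gluing), whereas splitting $\eps_i=e_{11}+e_{22}$ splits each arrow incident to $i$ into two (blow‑up); this gives the arrow prescriptions, once one notes in the blow‑up case that the absence of loops forces every path from $i$ to $i$ to have length $\ge2$, hence to lie in $\rad^2\bB'$, so no arrows appear between $i'$ and $i''$.

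For part (1) the new relations vanish in $\bA$ for a trivial reason: if $\al$ starts at $i$ and $\be$ ends at $j$, then in $\bB$ one has $\al\be=\al\eps_i\eps_j\be=0$ since $\eps_i\eps_j=0$, yet after identifying $i$ with $j$ the word $\al\be$ is a legal path of $\Ga_\bA$; hence $\al\be\in\ker\pi$, and symmetrically for $\be$ ending at $i$. The original relations $R$ persist because $\bA\sbe\bB$. To see these generate everything, I would observe that modulo $R\cup R'$ every path of $\Ga_\bA$ using one of the new gluing transitions already lies in $\langle R'\rangle$, so a spanning set of $\aK\Ga_\bA/\langle R\cup R'\rangle$ is given by the nonempty admissible paths of $\Ga$ together with the $m-1$ empty paths of $\Ga_\bA$. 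This is exactly $\dim\bB-1=\dim\bA$ elements (by Proposition~\ref{properties} only one idempotent is lost and $\rad\bA=\rad\bB$), and their images under $\pi$ are the evident basis of $\bA$; hence $\pi$ induces an isomorphism on the quotient and $\ker\pi=\langle R\cup R'\rangle$.

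Part (2) is the substantive one. The key local computation takes place in $\bB'$, whose $i$‑block I describe by matrix units $e_{st}$ with $e_{11}+e_{22}$ the doubled idempotent at $i$; the two copies of an arrow are related by the off‑diagonal units, $\al''=\al'e_{12}$ for $\al$ starting at $i$ and $\be''=e_{21}\be'$ for $\be$ ending at $i$. Then
$\al''\be''=\al'e_{12}e_{21}\be'=\al'e_{11}\be'=\al'\be'$,
which is exactly the new relation (both sides are the single composite factoring through $i$); the mismatched products such as $\al'\be''$ vanish and in any case are not paths of $\Ga_\bA$, as they would run from $i''$ to $i'$. The split relations $\nR',\nR''$ hold because the corner $f\bB'f$ with $f=e_{11}+\sum_{k\ne i}\eps_k$ is isomorphic to $\bB$ (the module $fP$ being the regular module, and likewise with $e_{22}$), so any relation of $\bB$ routed entirely through copy $1$ (resp.\ copy $2$) vanishes in $\bB'$.

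For completeness in part (2) I would again produce normal forms. Using $\al'\be'=\al''\be''$ repeatedly, every interior passage of a path through $i$ can be re‑routed through $i'$, so an admissible path of $\Ga$ acquires a lift that is well defined once a copy is chosen only at those of its endpoints sitting at $i$; by the corner isomorphisms these lifts map under $\pi$ onto the natural basis of $\rad\bB'$ consisting of admissible $\Ga$‑paths decorated by a choice of source and target copy. Together with the $m+1$ idempotents these normal forms exhaust a basis of $\bA$, so matching them bijectively with the spanning set of $\aK\Ga_\bA/\mathcal I$ (where $\mathcal I$ is the proposed ideal) forces $\mathcal I=\ker\pi$. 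The hard part is precisely this bookkeeping: one must verify that the new relations collapse exactly the interior multiplicities at $i$ while leaving the endpoint copies independent — consistency of the re‑routing being guaranteed by the fact that $\al'\be'$ and $\al''\be''$ are literally the same element of $\bB'$ — and that the split relations cut the surviving lifts down to the admissible paths and to nothing further.
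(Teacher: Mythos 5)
The paper offers no proof of Proposition~\ref{relations} at all --- it is stated as an immediate consequence of Definition~\ref{gluing} and Proposition~\ref{properties} --- so there is nothing to compare your argument against except the construction itself; what you have written is the natural detailed verification, and its key computations are correct. In part (1) the identity $\al\be=\al\eps_i\eps_j\be=0$ in $\bB\spe\bA$ is exactly the right reason the new relations hold, and the count $\dim\bA=\dim\bB-1$ together with the observation that every path of $\Ga_\bA$ crossing the glued vertex ``inconsistently'' lies in $\langle R'\rangle$ does close the argument. In part (2) the identification $\al''=\al'e_{12}$, $\be''=e_{21}\be'$ and the computation $\al''\be''=\al'e_{12}e_{21}\be'=\al'\be'$ is the heart of the matter, and the two corner isomorphisms $f\bB'f\simeq\bB$ correctly explain why $\nR'$ and $\nR''$ vanish. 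Your remark that absence of loops puts $e_{22}\rad\bB'e_{11}$ inside $\rad^2\bB'$, so that no arrows appear between $i'$ and $i''$, is a point worth making explicit and is right.

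One caveat on the completeness step of part (2), which you flag as ``bookkeeping'' but which hides the only place the stated relation set could fail to generate the kernel: if some $\nR\in R$ is supported on paths that both \emph{start and end} at $i$, then $\bB'$ also satisfies the two mixed versions $e_{21}\nR^{(1,1)}$ and $\nR^{(1,1)}e_{12}$ (source copy different from target copy), and these do not follow from $\nR'$, $\nR''$ and the flip relations, since the flip $\al'\be'=\al''\be''$ only re-routes \emph{interior} passages through $i$. Your dimension count would then come out strictly larger than $\dim\bA$. This situation requires an oriented cycle through $i$ carrying a relation and never occurs where the paper applies the proposition (blowing up is only ever performed on a hereditary algebra, where $R=\emptyset$), but a proof of the proposition as literally stated should either exclude it or add the mixed relations. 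Apart from this edge case your argument is sound.
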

  
 \begin{defin}\label{suitable} 
   We keep the notations of Definition~\ref{gluing} and choose pairwise different 
   indices $\lst i{r+s}$ and $\lst jr$ from $\set{1,2,\dots,m}$. We construct the 
   algebras $\lsto\bA{r+s}$ recursively:
   \begin{itemize}
   \item[] $\bA_0=\bB$;
   \item[] for $1\le k\le r$ the algebra $\bA_k$ is obtained from $\bA_{k-1}$ by gluing
   the components $\oB_{i_k}$ and $\oB_{j_k}$;
   \item[] for $r<k\le r+s$ the algebra $\bA_k$ is obtained from $\bA_{k-1}$ by blowing
   up the component $\oB_{i_k}$.
   \end{itemize}
   In this case we say that the algebra $\bA=\bA_{r+s}$ is obtained from $\bB$ by the 
   \emph{suitable sequence of gluings and blowings up} defined by the sequence of indices
   $(\lst i{r+s},\lst jr)$. Note that the order of these gluings and blowings up does not 
   imply the resulting algebra $\bA$.
   
   Usually such sequence of gluings and blowings up is given by a symmetric relation $\sim$
   (not an equivalence!) on the vertices of the quiver of $\bB$ or, the same, on the set of simple  
   $\bB$-modules $\oB_i$: we set $i_k\sim j_k$ for $1\le k\le r$ and $i_k\sim i_k$ for $r<k\le r+s$.
   Note that $\#\setsuch{j}{i\sim j}\le1$ for each vertex $i$. 
 \end{defin}
 
 \begin{theorem}\label{description} 
  A basic  algebra $\bA$ is nodal \iff it is isomorphic to an algebra obtained from 
  a  basic  hereditary algebra $\bH$ by a suitable sequence of gluings and blowings up 
  components. 
 \end{theorem}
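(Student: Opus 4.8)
The plan is to prove the two implications separately, extracting first a computation common to both. For a simple $\bA$-module $U$ corresponding to a primitive idempotent $\eps$ of $\bA$ I would record the isomorphism $\bH\*_\bA U\cong(\bH/J)\bar\eps$ of left $\bH$-modules, where $J=\rad\bA=\rad\bH$; it follows by applying the right exact functor $\bH\*_\bA-$ to $0\to J\eps\to\bA\eps\to U\to 0$ together with $\bH J=J$. Since $\bA/J\cong\aK^m$ has only one-dimensional simple modules, the length of any semisimple $\bA$-module equals its $\aK$-dimension; writing $\bH/J=\prod_k\Mat(n_k,\aK)$ and $\bar\eps=\sum_k c_k$ with $c_k\in\Mat(n_k,\aK)$ idempotent, this gives $\len_\bA(\bH\*_\bA U)=\dim_\aK(\bH/J)\bar\eps=\sum_k n_k\,\mathrm{rank}(c_k)$. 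Condition (3) of Definition~\ref{nodal} is therefore equivalent to $\sum_k n_k\,\mathrm{rank}(c_k)\le 2$ for every primitive idempotent of $\bA/J$.

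For the \emph{if} part, suppose $\bA$ is obtained from a basic hereditary algebra $\bH_0$ by a suitable sequence. The gluings leave the ambient algebra unchanged, while each blowing up replaces it by $\End P$ for a projective generator $P$, hence by a Morita equivalent --- and thus still hereditary --- algebra; so the sequence exhibits $\bA$ as the full preimage under $\bH\to\bH/J$ of an explicit semisimple subalgebra $\oA\sbe\bH/J$, with $\bH$ hereditary. Conditions (1) and (2) are immediate from $\rad\bA=\rad\bH$ (Proposition~\ref{properties}) and $1\in\bA\setminus J$, while condition (3) follows from the displayed dimension formula: by Proposition~\ref{properties} every old component gives $\dim_\aK(\bH/J)\bar\eps=1$ and every new component gives $2$.

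For the \emph{only if} part, let $\bA$ be basic nodal related to $\bH$. Feeding the orthogonality relations $\sum_\ell c_{k,\ell}=1_{\Mat(n_k,\aK)}$ --- valid because $\bA$ and $\bH$ share the unit --- into the inequality $\sum_k n_k\,\mathrm{rank}(c_{k,\ell})\le2$, I would deduce that $n_k\le2$ for every $k$ and that each primitive idempotent of $\bA/J$ is of exactly one of three kinds: the identity of a single $\aK$-block, the diagonal idempotent $(1,1)$ spread across two $\aK$-blocks, or one of a pair of orthogonal rank-one idempotents summing to the identity of a $\Mat(2,\aK)$-block. Reading these as unglued vertices, glued pairs, and blown-up vertices, one checks that each $\aK$-block is hit by a single idempotent and so has at most one gluing partner, which is precisely the constraint $\#\setsuch{j}{i\sim j}\le1$ of Definition~\ref{suitable}.

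It then remains to realize $\bA$ by the corresponding suitable sequence. Let $\bH_0$ be the basic hereditary algebra Morita equivalent to $\bH$, so that $\bH$ is the blow-up of $\bH_0$ at the $\Mat(2,\aK)$-blocks, and let $\bA'$ be the algebra produced from $\bH_0$ by gluing the pairs and blowing up the vertices found above. Because $\bA=\pi^{-1}(\bA/J)$ is determined by the subalgebra $\bA/J\sbe\bH/J$, it suffices to identify $\bA/J$ with the standard subalgebra attached to $\bA'$ (diagonal scalars on glued pairs, diagonal matrices on blown-up blocks). The only step requiring care --- and the main obstacle --- is to move the unordered pair of rank-one idempotents of each $\Mat(2,\aK)$-block into the standard position $e_{11},e_{22}$; this is done by conjugating with a unit of $\bH$ lifting the required change of basis of $\bH/J$, an inner automorphism fixing $J$ that carries $\bA$ isomorphically onto $\bA'$. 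Hence $\bA\cong\bA'$ is obtained from $\bH_0$ by a suitable sequence, completing the proof.
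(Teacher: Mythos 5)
Your proposal is correct and follows essentially the same route as the paper: the dimension bound $\sum_k n_k\,\mathrm{rank}(c_k)\le 2$ on the components of a primitive idempotent in $\bH/\rad\bH$, together with the resulting three-case classification (whole $\aK$-block, diagonal across two $\aK$-blocks, or one of a conjugate pair of rank-one idempotents in a $\Mat(2,\aK)$-block), is exactly the content of the paper's Lemma~\ref{semisimple}, which the paper states in terms of the lengths $\len_\bS(\tS\*_\bS\bS_k)$ rather than idempotent ranks. Your treatment of the final identification (lifting the change of basis in each $\Mat(2,\aK)$-block to an inner automorphism of $\bH$) is slightly more explicit than the paper's appeal to conjugacy inside the lemma and its concluding ``it is evident,'' but it is the same argument.
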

  
  In other words, a basic nodal algebra can be given by a quiver and a symmetric relation
  $\sim$ on the set of its vertices such that $\#\setsuch{j}{i\sim j}\le1$ for each vertex $i$.

\begin{proof}
   Proposition~\ref{properties} implies that if an algebra $\bA$ is obtained from 
  a  basic  hereditary algebra $\bH$ by a suitable sequence of gluings and blowings up, then 
  it is nodal. To prove the converse, we use a lemma about semisimple 
  algebras.
  
   \begin{lemma}\label{semisimple} 
  Let $\tS=\prod_{i=1}^m\tS_i$ be a semisimple  algebra, where $\tS_i\simeq\Mat(d_i,\aK)$
  are its simple components, $\bS=\prod_{k=1}^r\bS_k$ be its subalgebra such that 
  $\bS_k\simeq\aK$ and $\len_\bS(\tS\*_\bS\bS_k)\le2$ for all $1\le k\le r$. 
  Then, for each $1\le k\le r$ 
  \begin{enumerate}
  \item either $\bS_k=\tS_i$ for some $i$,
  \item or $\bS_k\sb\tS_i\xx\tS_j$ for some $i\ne j$ such that $\tS_i\simeq\tS_j\simeq\aK$
  and $\bS_k\simeq\aK$ embeds into $\tS_i\xx\tS_j\simeq\aK\xx\aK$ diagonally,
  \item or there is another index $q\ne k$ such that $\bS_k\xx\bS_q\sb\tS_i$
  for some $i$, $\tS_i\simeq\Mat(2,\aK)$ and this isomorphism can be so chosen that
  $\bS_k\xx\bS_q$ embeds into $\tS_i$ as the subalgebra of diagonal matrices.
  \end{enumerate}
 \end{lemma}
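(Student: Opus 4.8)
The plan is to reduce the length condition to a purely numerical statement about ranks of idempotents. First I would denote by $e_k\in\bS\sbe\tS$ the identity of the component $\bS_k$; these are orthogonal idempotents summing to $1_\tS$ (as $\bS$ contains the unit of $\tS$), and $\bS_k\simeq\bS e_k$ as left $\bS$-modules. Since $\bS e_k$ is a direct summand of $\bS$ as a left $\bS$-module, applying $\tS\*_\bS(-)$ to the splitting $\bS=\bS e_k\+\bS(1_\tS-e_k)$ identifies $\tS\*_\bS\bS_k\simeq\tS e_k$, so the hypothesis reads $\len_\bS(\tS e_k)\le2$.

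Next I would compute this length explicitly. Writing $e_k=\sum_i e^{(i)}_k$ with $e^{(i)}_k\in\tS_i\simeq\Mat(d_i,\aK)$ an idempotent of rank $\rho_{ki}$, decompose $\tS e_k=\bop_{l=1}^r e_l\tS e_k$ as left $\bS$-modules. Because $e_{l'}\cdot(e_l x e_k)=\delta_{l'l}\,e_l x e_k$, each summand $e_l\tS e_k$ is isotypic of type $\bS_l$, so its $\bS$-length equals its $\aK$-dimension $\sum_i\rho_{li}\rho_{ki}$. Summing over $l$ and using $\sum_l e^{(i)}_l=1_{\tS_i}$, hence $\sum_l\rho_{li}=d_i$, I obtain the clean formula $\len_\bS(\tS e_k)=\sum_{i=1}^m d_i\rho_{ki}$. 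The hypothesis thus becomes $\sum_i d_i\rho_{ki}\le2$, with the integers $\rho_{ki}\ge0$ not all zero.

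Finally I would run the integer analysis of $\sum_i d_i\rho_{ki}\le2$. If a single index $i$ contributes, then either $d_i=\rho_{ki}=1$, giving $e_k=1_{\tS_i}$ and $\bS_k=\tS_i$ (case (1)), or $d_i=2,\ \rho_{ki}=1$, so $e^{(i)}_k$ is a rank-one idempotent in $\Mat(2,\aK)$ (case (3)); if two indices $i\ne j$ contribute, the bound forces $d_i=d_j=\rho_{ki}=\rho_{kj}=1$, yielding the diagonal embedding of case (2); three or more contributing indices are impossible. The one point needing care is case (3), where I must produce the partner index $q$. Since $\sum_l\rho_{li}=d_i=2$ and $\rho_{ki}=1$, exactly one further index $q$ has $\rho_{qi}=1$, with $e^{(i)}_k,e^{(i)}_q$ orthogonal rank-one idempotents summing to $1_{\tS_i}$. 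Applying the already-established formula to $q$, the single term $d_i\rho_{qi}=2$ exhausts the bound, forcing $\rho_{qi'}=0$ for all $i'\ne i$; hence $e_q$ also lies entirely in $\tS_i$, and after conjugating $e^{(i)}_k,e^{(i)}_q$ to the diagonal matrix units we land exactly in the situation of case (3). The main obstacle is precisely this last verification that the partner $\bS_q$ sits inside the same block $\tS_i$, which is what closes the argument.
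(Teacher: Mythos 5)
Your proof is correct and follows essentially the same route as the paper's: the quantity $\sum_i d_i\rho_{ki}$ you compute is exactly $\sum_i\dim_\aK(\tS_i\*_\bS\bS_k)$, which is what the paper's proof bounds by $2$, and the ensuing case analysis (at most two blocks hit, block sizes forced, partner index in the $\Mat(2,\aK)$ case) is the same. Your version is somewhat more explicit at one point the paper passes over quickly, namely that the partner component $\bS_q$ must lie \emph{entirely} inside the same block $\tS_i$, which you get by applying the length bound to $q$.
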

 \begin{proof}
 Denote $L_{ik}=\tS_i\*_\bS\bS_k$. Certainly $L_{ik}\ne0$ \iff the projection of $\bS_k$
 onto $\tS_i$ is non-zero. Since $L_k=\tS\*_\bS\bS_k=\bop_{i=1}^mL_{ik}$, there are at most
 two indices $i$ such that $L_{ik}\ne0$. Therefore either $\bS_k\sbe\tS_i$ for some $i$ or
 $\bS_k\sbe\tS_i\xx\tS_j$ for some $i\ne j$ and both $L_{ik}$ and $L_{jk}$ are non-zero. 
 Note that $\dim_\aK L_{ik}\ge d_i$ and $\dim_\aK L_k\le2$.
 So in the latter case $\tS_i\simeq\tS_j\simeq\aK$. Obviously,
 $\aK$ can embed into $\aK\xx\aK$ only diagonally.
 
 Suppose that $\bS_k\sbe\tS_i$ but $\bS_k\ne\tS_i$. Then $d_i=2$, so $\tS_i\simeq\Mat(2,\aK)$.
 Then the unique simple $\tS_i$-module is $2$-dimensional. If $\bS_k$ is the only simple $\bS$-module
 such that $\bS_k\sb\tS_i$, then it embeds into $\tS_i$ as the subalgebra of scalar matrices,
 thus $L_{ik}\simeq\tS_i$ is of dimension $4$, which is impossible. 
 Hence there is another index $q\ne k$ such 
 that $\bS_q\sb\tS_i$. Then the image of $\bS_k\xx\bS_q\simeq\aK^2$ in $\Mat(2,\aK)$ 
 is conjugate to the subalgebra of diagonal matrices \cite[Chapter II, Exercise 2]{dk}. 
 \end{proof}
 
  Let now $\bA$ be a nodal algebra related to a hereditary algebra $\tH$, 
  $\tS=\tH/\rad\tH$ and $\oA=\bA/\rad\bA$. We denote by $\bH$ 
  the basic algebra of $\tH$ \cite[Section III.5]{dk} and for each simple component 
  $\tS_i$ of $\tS$ we denote by $\oH_i$ the corresponding simple components of 
  $\oH=\bH/\rad\bH$. We can apply Lemma~\ref{semisimple} to the algebra 
  $\tS=\tH/\rad\tH$ and its subalgebra $\oA=\bA/\rad\bA$. Let $(i_1,j_1),\dots,(i_r,j_r)$
  be all indices such that the products $\tS_{i_k}\xx\tS_{j_k}$ occur as in the case (2)
  of the Lemma, while $i_{r+1},\dots,i_{r+s}$ be all indices such that $\tS_{i_k}$ occur
  in the case (3). Then it is evident that $\bA$ is obtained from $\bH$ by the suitable
  sequence of gluings and blowings up defined by the sequence of indices
  $(\lst i{r+s},\lst jr)$.
  \end{proof}
  
  Since the construction of gluing and blowing up is left--right symmetric, we get the
  following corollary.
  
  \begin{corol}\label{opposite} 
   If an algebra $\bA$ is nodal, so is its opposite algebra. In particular, in the 
   Definition~\ref{nodal} one can replace the condition $(3)$ by the condition $(3')$
   from Remark~\ref{remark}.
  \end{corol}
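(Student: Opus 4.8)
The plan is to derive both assertions from the structural description in Theorem~\ref{description}, exploiting that passing to the opposite algebra interacts trivially with gluing and blowing up. First I would reduce to the basic case. By Proposition~\ref{morita} an algebra is nodal \iff its basic algebra is, and the basic algebra of $\bA^{op}$ is the opposite of the basic algebra of $\bA$; hence it suffices to show that the opposite of a \emph{basic} nodal algebra is again nodal. So assume $\bA$ is basic and, by Theorem~\ref{description}, write $\bA=\bA_{r+s}$ as obtained from a basic hereditary algebra $\bH$ by the suitable sequence of gluings and blowings up of Definition~\ref{suitable}.

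Next I would verify that each elementary step commutes with $(-)^{op}$. The opposite $\bH^{op}$ of a basic hereditary algebra is again basic hereditary, its quiver being $\Ga$ with all arrows reversed (still no oriented cycles). For a single step I would use Definition~\ref{gluing}: the algebra produced there is the preimage, in $\bB$ (resp. in the blow-up $\bB'$), of a subalgebra of the semisimple quotient cut out by a diagonal condition. Passing to the opposite sends $\Mat(2,\aK)$ to $\Mat(2,\aK)^{op}$, and transposition gives a canonical isomorphism $(\oB')^{op}\ito\oB'$ that fixes the scalar subalgebras and carries diagonal matrices to diagonal matrices; for a gluing the relevant quotient is commutative, so nothing happens. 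Hence this transpose identifies $\oA^{op}$ with a subalgebra of exactly the same form, so that $\bA_k^{op}$ is precisely the algebra obtained from $\bA_{k-1}^{op}$ by gluing (resp. blowing up) the same vertex. Iterating over the whole sequence, $\bA^{op}$ is obtained from $\bH^{op}$ by the suitable sequence defined by the same index data, and therefore $\bA^{op}$ is nodal by Theorem~\ref{description}. This settles the first assertion.

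For the ``in particular'' part I would first note that conditions (1) and (2) of Definition~\ref{nodal} are self-dual, since $\rad(\bH^{op})=(\rad\bH)^{op}$ and $\rad(\bA^{op})=(\rad\bA)^{op}$; thus $\bA$ related to $\bH$ forces $\bA^{op}$ related to $\bH^{op}$. It then remains to match condition (3) for $\bA^{op}$ with condition $(3')$ for $\bA$. Simple left $\bA^{op}$-modules are exactly the simple right $\bA$-modules $U$, and the standard reversal of tensor factors over opposite rings gives a natural identification of $\bH^{op}\*_{\bA^{op}}U$ with $U\*_\bA\bH$ of the same length. Hence condition (3) for $\bA^{op}$ (related to $\bH^{op}$) is literally condition $(3')$ for $\bA$ (related to $\bH$). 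Combining this with the first part: $\bA$ satisfies (1),(2),$(3')$ \iff $\bA^{op}$ satisfies (1),(2),(3) \iff $\bA^{op}$ is nodal \iff $\bA$ is nodal \iff $\bA$ satisfies (1),(2),(3); so (3) may indeed be replaced by $(3')$.

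The step I expect to be the main obstacle is the bookkeeping at the blow-up: one must check that forming $\bB'=\End_\bB P$ and then passing to the opposite genuinely reproduces the blow-up of $\bH^{op}$ at the corresponding vertex, i.e. that the transpose isomorphism $(\oB')^{op}\ito\oB'$ is compatible with the entire extension $\rad\bB'=\rad\bA\sb\bA\sbe\bB'$ and not merely with its semisimple quotient. Once that identification is pinned down, everything else is formal.
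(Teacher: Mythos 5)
Your proposal is correct and follows essentially the same route as the paper, which derives the corollary in one line from Theorem~\ref{description} together with the observation that gluing and blowing up are left--right symmetric constructions; you simply make the transpose identification and the reduction to the basic case via Proposition~\ref{morita} explicit. The matching of condition (3) for $\bA^{op}$ with condition $(3')$ for $\bA$ is also exactly what the paper intends.
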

    
  Thus, to define a basic nodal algebra, we have to define a quiver $\Ga$ and a sequence of
  its vertices $(\lst i{r+s},\lst jr)$. Actually, one can easily describe the resulting
  algebra $\bA$ by its quiver and relations. Namely, we must proceed as follows:
 \begin{enumerate}
   \item For each $1\le k\le r$
   \begin{enumerate}
    \item we glue the vertices $i_k$ and $j_k$ keeping all arrows 
   starting and ending at these vertices;
    \item if an arrow $\al$ starts at the vertex $i_k$ (or $j_k$) and an arrow $\be$
    ends at the vertex $j_k$ (respectively $i_k$), we impose the relation $\al\be=0$.
   \end{enumerate} 
   \item For each $r<k\le r+s$
	\begin{enumerate}
      \item  we replace each vertex $i_k$ by two vertices $i_k'$ and $i_k''$;
      \item  we replace each arrow $\al:j\to i_k$ by two arrows $\al':j\to i_k'$
      and $\al'':j\to i_k''$;
      \item  we replace each arrow $\be:i_k\to j$ by two arrows $\be':i'_k\to j$
      and $\be'':i_k''\to j$;
      \item if an arrow $\be$ starts at the vertex $i_k$ and an arrow $\al$ ends at this 
      vertex, we impose the relation $\be'\al'=\be''\al''$.
   \end{enumerate}   
 \end{enumerate}  
  We say that $\bA$ is a \emph{nodal algebra of type} $\Ga$. In particular, if $\Ga$ is a
  Dynkin quiver of type $\rA$ or a Euclidean quiver of type $\ti\rA$, we say that $\bA$ is a 
  \emph{nodal algebra of type $\rA$}. 
  
 To define a nodal algebra which is not necessarily
  basic, we also have to prescribe the multiples $l_i$ for each vertex $i$ so that 
  $l_{i_k}=l_{j_k}$ for $1\le k\le r$. 
  
  In what follows we often present a basic nodal algebra by the quiver $\Ga$, just marking
  the vertices $\lst ir,\lst jr$ by bullets, with the indices $1,2,\dots,r$, and marking
  the vertices $i_{r+1},\dots,i_{r+s}$ by circles. For instance:
  \[
   \xymatrix{  \ccdot \ar[r]^{\al_1} & \bullet1 \ar[r]^{\al_2} & 
    \ccdot \ar[r]^{\al_3} \ar[d]^{\al_5} & \bullet1 &
    \bullet2 \ar[l]_{\al_4} \\
    && \circ3 \ar[d]^{\al_6} \\ && \bullet2 }
  \]
  In this example the resulting nodal algebra $\bA$ is given by the quiver with relations
  \[
   \xymatrix{&& \circ3' \ar[dr]^{\al'_6} \\
   \ccdot \ar[r]^{\al_1} & \bullet1 \ar@<.5ex>[r]^{\al_2} &
   \ccdot \ar@<.5ex>[l]^(.2){\al_3} \ar[u]^{\al'_5} \ar[dr]_(.6){\al''_5} &
   \bullet2 \ar@/^1.5pc/[ll]_<<{\al_4} & {}\quad
   \begin{minipage}{4cm}
   $\al_2\al_3=\al_2\al_4=0$\newline $\al_4\al'_6=\al_4\al''_6=0$\newline
    $\al'_6\al'_5=\al''_6\al''_5$
   \end{minipage}
      \\ &&& \circ3'' \ar[u]_{\al''_6} }
  \]
         
  \section{Inessential gluings}
  \label{sec2} 

  In this section we study one type of gluing which never implies the representation
  type.
  
  \begin{defin}\label{inessential} 
  Let a basic  algebra $\bB$ is given by a quiver $\Ga$ with relations and an algebra
  $\bA$ is obtained from $\bB$ by gluing the components corresponding to the vertices
  $i$ and $j$ such that there are no arrows ending at $i$ and no arrows starting at $j$.
  Then we say that this gluing is \emph{inessential}.
  \end{defin}
  
  It turns out that the categories $\bA\md$ and $\bB\md$ are ``almost the same.''
  
  \begin{theorem}\label{effect} 
   Under the conditions of Definition~\ref{inessential}, there is an equivalence of the
   categories $\bB\md/\gnr{\oB_i,\oB_j}$ and $\bA\md/\gnr{\oA_{ij}}$, where $\kC/\gnr{\dM}$
   denotes the quotient category of $\kC$ modulo the ideal of morphisms that factor through
   direct sums of objects from the set $\dM$.
  \end{theorem}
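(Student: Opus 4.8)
The plan is to show that the restriction functor itself induces the required equivalence. Since $\bA\sbe\bB$ and $\rad\bA=\rad\bB$, every $\bB$-module becomes an $\bA$-module by restriction, giving a functor $\mathrm{Res}\colon\bB\md\to\bA\md$ which is the identity on underlying vector spaces and merges the spaces at the vertices $i$ and $j$ into a single space at the glued vertex (the idempotent there being $\eps_i+\eps_j$). A direct inspection of the defining data shows $\mathrm{Res}(\oB_i)\simeq\mathrm{Res}(\oB_j)\simeq\oA_{ij}$, so $\mathrm{Res}$ carries morphisms factoring through $\gnr{\oB_i,\oB_j}$ to morphisms factoring through $\gnr{\oA_{ij}}$ and hence descends to a functor $\bar{\mathrm{Res}}\colon\bB\md/\gnr{\oB_i,\oB_j}\to\bA\md/\gnr{\oA_{ij}}$. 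It then remains to prove that $\bar{\mathrm{Res}}$ is dense and fully faithful.

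Density I expect to hold already before passing to the quotients. Here inessentiality is crucial: because no arrows end at $i$ and none start at $j$, at the glued vertex of an $\bA$-module $M$ (we write $M_*$ for the corresponding space) every incoming map has image in a subspace $I$ (the sum of the images of the incoming maps) while every outgoing map vanishes on $K$ (the intersection of the kernels of the outgoing maps), and the relations $\al\be=0$ force $I\sbe K$. Choosing any splitting $M_*=N_i\+N_j$ with $I\sbe N_j\sbe K$ lets me place the incoming maps on $N_j$ and the outgoing maps on $N_i$; this defines a $\bB$-module $N$ with $\mathrm{Res}(N)=M$ on the nose. Thus $\mathrm{Res}$ is already essentially surjective.

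For full faithfulness I would compute $\hom_\bA(\mathrm{Res}\,N,\mathrm{Res}\,N')$ by writing the component of a morphism at the glued vertex as a block matrix $\mtr{p&q\\r&s}$ with respect to $N_i\+N_j$ and $N'_i\+N'_j$, where $p\colon N_i\to N'_i$, $q\colon N_j\to N'_i$, $r\colon N_i\to N'_j$, $s\colon N_j\to N'_j$. Commutation with the outgoing and incoming maps forces $\im q\sbe K'_i$ and $q|_I=0$, where $K'_i\sbe N'_i$ is the intersection of the kernels of the outgoing maps of $N'$, and imposes on $p,s$ and the remaining components exactly the conditions defining a $\bB$-morphism, while $r$ stays completely free; an arrow $i\to j$ (which becomes a loop at the glued vertex) contributes only the relation $s\ga=\ga'p$, which is already among the $\bB$-module conditions. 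This yields a decomposition $\hom_\bA(\mathrm{Res}\,N,\mathrm{Res}\,N')=\hom_\bB(N,N')\+Q\+R$ with $R=\hom_\aK(N_i,N'_j)$ and $Q=\setsuch{q\colon N_j\to N'_i}{q|_I=0,\ \im q\sbe K'_i}$, and $\mathrm{Res}$ is the inclusion of the first summand.

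The last and most delicate step is to match the two ideals of negligible morphisms. Describing the maps into and out of sums of $\oA_{ij}$ shows that a morphism lies in $\gnr{\oA_{ij}}$ \iff its glued component $f$ satisfies $f|_I=0$ and $\im f\sbe K'$; the key observation — and the main obstacle to get right — is that in the restricted module the outgoing kernel is $K'=K'_i\+N'_j$, not merely $N'_j$. With this, the negligible subspace of $\hom_\bA(\mathrm{Res}\,N,\mathrm{Res}\,N')$ is exactly $\mathcal N_\bB\+Q\+R$, where $\mathcal N_\bB$ is the space of $\bB$-morphisms factoring through $\gnr{\oB_i,\oB_j}$. Consequently $\bar{\mathrm{Res}}$ induces an isomorphism $\hom_\bB(N,N')/\mathcal N_\bB\ito\hom_\bA(\mathrm{Res}\,N,\mathrm{Res}\,N')/\mathcal N_\bA$ on every Hom-space, and together with density this establishes the asserted equivalence.
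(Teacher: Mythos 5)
Your argument is correct, and it reaches the equivalence by a route whose mechanics differ from the paper's, even though the forward functor is the same: your restriction functor $\mathrm{Res}$ coincides with the functor $\bF$ of formulae \eqref{FM} (restricting along $\bA\sbe\bB$ merges $M(i)$ and $M(j)$ into $(\eps_i+\eps_j)M$ with exactly the block maps written there), and recognizing $\bF$ as restriction of scalars is a nice conceptual bonus. The paper instead builds an explicit candidate quasi-inverse $\bG$ (with $\bG N(i)=N(ij)/\bap_{\al}\Ker N(\al)$ and $\bG N(j)=\sum_{\be}\im N(\be)$), shows $\Ker\bG=\gnr{\oA_{ij}}$, and exhibits natural transformations $\id\to\bG\bF$ and $\id\to\bF\bG$ that are isomorphisms off the trivial summands; you dispense with any inverse functor and verify density and full faithfulness of the induced functor on the quotients directly. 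Your version adds two things the paper does not make explicit: $\mathrm{Res}$ is essentially surjective already before passing to the quotient (any splitting $M_*=N_i\+N_j$ with $\sum\im\sbe N_j\sbe\bap\Ker$ works, the containment $\sum\im\sbe\bap\Ker$ being precisely the glued relations $\al\be=0$), whereas the paper's $\psi_N$ is an isomorphism only for $N$ without summands $\oA_{ij}$; and your matching of the two ideals, in particular the identification $K'=K'_i\+N'_j$ of the outgoing kernel of a restricted module, makes transparent why exactly the summands $Q\+R$ are absorbed into $\gnr{\oA_{ij}}$ while $\mathcal N_\bB$ accounts for the rest. The price is that the Hom computation needs some care at the degenerate spots — an arrow $i\to j$ becomes a loop at $(ij)$ and its commutation constraints on the block $q$ must be seen to be subsumed in $q|_I=0$ and $\im q\sbe K'_i$, as you note, and one should also record the routine check that the $\bB$-module produced in the density step satisfies the relations of $\bB$ (it does, since under inessentiality no path of $\bB$ can pass through $i$ or $j$ except by starting at $i$ or ending at $j$) — but every step checks out.
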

   \begin{proof}
    We identify $\bB$-modules and $\bA$-modules with the representations of the
    corresponding quivers with relations. Recall that the quiver of $\bA$ is obtained
    from that of $\bB$ by gluing the vertices $i$ and $j$ into one vertex $(ij)$.
    For a $\bB$-module $M$ denote by $\bF M$ the $\bA$-module such that
   \begin{equation}\label{FM} 
   \begin{split}
    \bF M(k)&=M(k) \text{ for any vertex } k\ne(ij),\\
    \bF M(ij)&=M(i)\+M(j),\\
    \bF M(\ga)&=M(\ga) \text{ if } \ga\notin\Ar(ij),\\
    \bF M(\al)&=\mtr{M(\al)&0} \text{ if } \al\in\Ar(i)\=\Ar(j),\\
    \bF M(\be)&=\mtr{0\\M(\be)} \text{ if } \be\in\Ar(j)\=\Ar(i)\\
    \bF M(\al)&=\mtr{0&0\\M(\al)&0} \text{ if } \al:i\to j. 
  \end{split}
  \end{equation}
   If $f:M\to M'$ is a homomorphism of $\bB$-modules, we define the homomorphism 
   $\bF f:\bF M\to \bF M'$ setting
   \begin{align*}
   \bF f(k)&=f(k)\, \text{ if } k\ne (ij),\\
   \bF f(ij)&=\begin{pmatrix}
   f(i) & 0 \\ 
   0 & f(j)
   \end{pmatrix} 
   \end{align*}
   Thus we obtain a functor $\bF:\bB\md\to\bA\md$. Obviously, $\bF\oB_i=\bF\oB_j=\oA_{ij}$,
   so $\bF$ induced a functor $\fF:\bB\md/\gnr{\oB_i,\oB_j}\to\bA\md/\gnr{\oA_{ij}}$. 
      
   Let now $N$ be an $\bA$-module. We define the $\bB$-module $\bG N$ as follows:
   \begin{align*}
   \bG N(k)&=N(k) \text{ if } k\notin\{i,j\},\\
   \bG N(i)&=N(ij)/N_0(ij),  \text{ where } N_0(ij)=\bap_{\al^-=(ij)}\Ker N(\al),\\
   \bG N(j)&=\sum_{\be^+=(ij)}\im N(\be),\\
   \bG N(\be)&=N(\be)  \text{ if } \be\notin \Ar(i),\\
   \bG N(\al)&  \text{ is the induced map } \bG N(i)\to \bG N(k)  \text{ if } 
   				\al:i\to k.
   \end{align*}
   Note that if $\be^+=j$, then $\im N(\be)\sbe \bG N(j)$. If $g:N\to N'$ is a homomorphism
   of $\bA$-modules, then $g(ij)(\bG N(j))\sbe \bG N'(j)$ and $g(ij)(N_0(ij))\sbe N'_0(ij)$. So
   we define the homomorphism $\bG g:\bG N\to \bG N'$ setting
   \begin{align*}
   \bG g(k)&=g(k)\,  \text{ if } k\ne i,\\
   \bG g(i) & \text{ is the map $\bG N(i)\to\bG N'(i)$ induced by } g(ij),\\
   \bG g(j)&  \text{ is the resriction of } g(ij)  \text{ onto } \bG N(j).
   \end{align*}
   Thus we obtain a functor $\bG:\bA\md\to\bB\md$. Since $\bG\oA_{ij}=0$, it induces
   a functor $\fG:\bA\md/\gnr{\oA_{ij}}\to\bB\md/\gnr{\oB_i,\oB_j}$. Suppose that $\bG g=0$.
   It means that $g(k)=0$ for $k\ne(ij)$, $\im g(ij)\sbe\bap_{\al^-=(ij)}\Ker N'(\al)$ and
   $\Ker g(ij)\spe\sum_{\be^+=(ij)}\im N(\be)$. So $g(ij)$ induces the map 
   $$
   \bar g:N(j)/\sum_{\be^+=j}\im N(\be)\to N'(ij)
   $$ 
   with 
   $\im\bar g\sbe\bap_{\al^-=(ij)}\Ker N'(\al)$. So $g=g''g'$, where
   \begin{align*}
   g':& N\to \ol N  \text{ and } g'':\ol N\to N',\\
   \ol N(k)&=0  \text{ if } k\ne(ij),\\
   \ol N(ij)&= N(j)/\sum_{\be^+=j}\im N(\be),\\
   g'(k)&=g''(k)=0  \text{ if } k\ne(ij),\\
   g'(ij)&  \text{ is the natural surjection } N(ij)\to\ol N(ij),\\
   g''(ij)&=\bar g.
   \end{align*}
   Obviously, $\ol N\simeq m\oA_{ij}$ for some $m$,
   so $\Ker\bG$ is just the ideal $\gnr{\oA_{ij}}$. 
   
   By the construction,
   \begin{align*}
    \bG\bF M(i)&=M(i)/\bap_{\al^-=i}\Ker\al,\\
   \bG\bF M(j)&=\sum_{\be^+=j}\im\be,\\
  \bF\bG N(ij)&=N(ij)/\bap_{\al^-=i}\Ker\al\+\sum_{\be^+=(ij)}\im N(\be).
   \end{align*}   
   So we fix
   \begin{itemize}
   \item[] for every $\bB$-module $M$ a retraction $\rho_M:M(j)\to\sum_{\be^+=j}\im\be$,
   \item[] for every $\bA$-module $N$ a retraction $\rho_N:N(ij)\to\sum_{\be^+=(ij)}\im\be$
   \end{itemize}
   and define the morphisms of functors
   \begin{align*}
   \phi:&\ \id_{\bB\md}\to \bG\bF \text{ such that}\\
     & \phi_M(k)=\id_{M(k)}  \text{ if } k\notin\{i,j\},\\
     & \phi_M(j)=\rho_M,\\
     & \phi_M(i)  \text{ is the natural surjection } M(i)\to\bG\bF M(i),
     \intertext{and}
   \psi: &\ \id_{\bA\md}\to \bF\bG \text{ such that}\\
     & \psi_N(k)=\id_{N(k)}  \text{ if } k\ne(ij),\\
     & \psi_N(ij)=\rho_N.
   \end{align*}
   Evidently, if $M$ has no direct summands $\oB_i$ and $\oB_j$, then $\phi_M$ is an 
   isomorphism. Also if $N$ has no direct summands $\oA_{ij}$, then $\psi_N$ is an
   isomorphism. Therefore, $\fG$ and $\fF$ are mutually quasi-inverse, defining an
   equivalence of the  categories $\bB\md/\gnr{\oB_i,\oB_j}$ and $\bA\md/\gnr{\oA_{ij}}$.
   \end{proof}
  
  \section{Gentle and skewed-gentle case}
  \label{sec3} 
  
  In what follows we only consider non-hereditary nodal algebras, since the 
  representation types of hereditary algebras are well-known. Evidently, blowing up 
  a vertex $i$ such that there are no arrows starting at $i$ or no arrows ending at $i$, 
  applied to a hereditary algebra, gives a hereditary algebra. The same happens if we glue
  vertices $i$ and $j$ such that there are no arrows starting at these vertices or no arrows 
  ending at them. 
  
  Recall that a basic (\fdd) algebra $\bA$ is said to be \emph{gentle} if it is given by a
  quiver $\Ga$ with relations $R$ such that 
\begin{enumerate}
  \item for every vertex $i\in \Ga$, there are at most two arrows starting at $i$ and at most
  two arrows ending at $i$;
  \item all relations in $R$ are of the form $\al\be$ for some arrows $\al,\be$;
  \item if there are two arrows $\al_1,\al_2$ starting at $i$, then, for each arrow $\be$
  ending at $i$, either $\al_1\be\in R$ or $\al_2\be\in R$, but not both;
  \item if there are two arrows $\be_1,\be_2$ ending at $i$, then, for each arrow $\al$
  starting at $i$, either $\al\be_1\in R$ or $\al\be_2\in R$, but not both.
\end{enumerate}
  A basic algebra $\bA$ is said to be \emph{skewed-gentle} if it can be obtained from a gentle algebra
  $\bB$ by blowing up some vertices $i$ such that there is at most one arrow $\al$ starting at $i$,
  at most one arrow $\be$ ending at $i$ and if both exist then $\al\be\notin R$.\!%
\footnote{\,The original definition of skew-gentle algebras in \cite{gp} as well as that 
in \cite{bek} differ from ours, but one can easily see that all of them are equivalent.}

 It is well-known that gentle and skewed-gentle algebras are tame, and even derived tame (i.e.
 their derived categories of \fdd\ modules are also tame). Skowronski and Waschb\"usch \cite{sw}
 proved a criterion of representation finiteness for biserial algebras, the class containing,
 in particular, all gentle algebras. We give a complete description 
 of nodal algebras which are gentle or skewed-gentle. 
 
 \begin{theorem}\label{gentle}  
  A nodal algebra $\bA$ is skewed-gentle \iff it is obtained from a 
  direct product of hereditary algebras of type $\rA$ or $\tA$ by a suitable sequence of 
  gluings and blowings up defined by a sequence of vertices such that, for each of them, 
  there is at most one arrow starting and at most one arrow ending at this vertex. 
  It is gentle if and only if, moreover, it is obtained using only gluings.
 \end{theorem}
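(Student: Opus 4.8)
The plan is to translate both the nodal operations and the defining axioms of (skewed-)gentle algebras into the language of quivers with relations via Proposition~\ref{relations}, and then to compare them vertex by vertex. The base case is that a basic \emph{hereditary} algebra $\bH$ is gentle \iff the total number of arrows incident to each vertex is at most $2$, equivalently $\bH$ is a direct product of hereditary algebras of type $\rA$ and $\tA$. Indeed, as $\bH$ has no relations, axioms (3) and (4) can only hold vacuously: a vertex with two outgoing arrows must have no incoming arrow, and a vertex with two incoming arrows must have no outgoing arrow. Together with axiom (1) this says precisely that the underlying graph has all vertices of degree at most $2$, i.e.\ it is a disjoint union of paths (type $\rA$) and cycles (type $\tA$, necessarily not oriented cyclically since $\bH$ is hereditary).

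The backward implications are then direct computations. Gluing two vertices $i,j$ of such an $\bH$, each carrying at most one incoming and one outgoing arrow, identifies them and, by Proposition~\ref{relations}(1), imposes the monomial ``crossing'' relations $\al\be=0$ with $\al^-=i,\ \be^+=j$ or $\al^-=j,\ \be^+=i$. The merged vertex then carries at most two arrows of each orientation, and whenever two arrows $\al_i,\al_j$ start and one arrow $\be$ ends there, exactly one of $\al_i\be,\al_j\be$ is a crossing relation; this is exactly axioms (3)--(4), so the glued algebra is gentle. Allowing in addition a blowing up at such a vertex splits it and imposes, by Proposition~\ref{relations}(2), the relation $\al'\be'=\al''\be''$; this is precisely the modification of a gentle algebra permitted in the definition of skewed-gentle, the blown-up vertex still carrying at most one arrow of each orientation with $\al\be\notin R$ (a path through an unglued vertex is never a crossing relation). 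Hence the constructed algebra is skewed-gentle.

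For the forward implications I would present $\bA$ by $\bH$ together with the symmetric relation $\sim$, taking $\bH$ to be the minimal hereditary cover built in the proof of Theorem~\ref{description}. If $\bA$ is gentle, all relations are monomial, so no blown-up vertex may carry both an incoming and an outgoing arrow (that would force the binomial relation $\al'\be'=\al''\be''$); minimality of the cover rules out the remaining, ``benign'' blowings up (at vertices with only incoming or only outgoing arrows, which merely reproduce a larger hereditary algebra). Thus only gluings occur, and axiom (3) forces every glued vertex to carry at most one outgoing arrow (two would make both crossing products relations), and dually at most one incoming arrow; the base case applied to $\bH$ gives the claimed form. For skewed-gentle one peels off the blowings up: the algebra $\bB$ defined by $\bH$ and the gluings alone is gentle, $\bA$ is its blow-up at vertices carrying at most one arrow of each orientation, and since gluings and blowings up commute (Definition~\ref{suitable}) $\bA$ is reassembled from a product of type $\rA$ and $\tA$ algebras in the required way.

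I expect the main obstacle to be the bookkeeping in the forward direction caused by the non-uniqueness of the nodal decomposition: one must check that the hereditary cover may be chosen with no benign blowings up, and that the gentle algebra $\bB$ implicit in the definition of skewed-gentle can be matched, up to the allowed freedom, with the one arising from the canonical cover of Theorem~\ref{description}. Once the decomposition is normalized in this way, all remaining verifications are local vertex-by-vertex checks of the (skewed-)gentle axioms against Proposition~\ref{relations}.
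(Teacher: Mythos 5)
Your proposal is correct and follows essentially the same route as the paper: translate everything into quivers with relations via Proposition~\ref{relations}, observe that a hereditary algebra is gentle exactly when its quiver is a disjoint union of quivers of type $\rA$ and $\tA$, and then check the gentle/skewed-gentle axioms locally at each glued or blown-up vertex in both directions. Your additional care in normalizing the presentation (choosing a hereditary cover with no benign blowings up or inessential gluings) is a point the paper passes over silently, but it does not change the substance of the argument.
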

 \begin{proof} 
  If $\bA$ is related to a hereditary algebra $\bH$ such that its quiver is not a 
  disjoint union of quivers of type $\rA$ or $\tA$, 
  there is a vertex $i$ in the quiver of $\bH$ such that $\Ar(i)$ has at least $3$ arrows.
  The same is then true for the quiver of $\bA$. Moreover, there are no relations containing 
  more that one of these arrows, which is impossible in a gentle or skewed-gentle algebra. 
  
  So we can suppose that the quiver of $\bH$ is a disjoint union of quivers of type 
  $\rA$ or $\tA$. Let $\bA$ is obtained from $\bH$
  by a suitable sequence of gluings and blowings up defined by a sequence of vertices 
  $\lst i{r+s},\lst jr$. Suppose that there is an index $1\le k\le r+s$ such that there are two
  arrows $\al_1,\al_2$ ending at $i_k$ (the case of two starting arrows is analogous). 
  If $k\le r$ and there is an arrow ending at $j_k$, there are $3$ vertices ending at the vertex
  $(ij)$ in the quiver of $\bA$, neither two of them occurring in the same relation, which is
  impossible in gentle or skewed-gentle case. 
  If there is an arrow $\be$ starting at $j_k$, it occurs in two zero relations 
  $\be\al_1=\be\al_2=0$, which is also impossible.
  
  Finally, if we apply blowing up, we obtain three arrows incident to a vertex without zero relations
  between them which is impossible in a gentle algebra. Thus the conditions of the theorem are 
  necessary.
  
  On the contrary, let $\bH$ be a hereditary algebra and its quiver is a disjoint union of
  quivers of type $\rA$ or $\tA$, $i_1\ne i_2$ be vertices 
  of this quiver such that there is a unique arrow $\al_k$ starting at $i_k$ as well as a unique
  arrow $\be_k$ ending at $i_k$ ($k=1,2$). Then gluing of vertices $i_1,i_2$ gives a vertex
  $i=(i_1i_2)$ in the quiver of the obtained algebra, two arrows $\al_k$ starting at $i$ and
  two arrows $\be_k$ ending at $i$ ($i=1,2$) satisfying relations $\al_1\be_2=\al_2\be_1=0$. 
  Therefore, gluing such vertices give a gentle algebra. Afterwards, blowing up vertices $j$
  such that there is one arrow $\al$ starting at $j$ and one arrow $\be$ ending at it gives a 
  skewed-gentle algebra since $\al\be\ne0$ in $\bH$.
 \end{proof}

\section{Exceptional algebras}
\label{sec4}

 We consider some more algebras obtained from hereditary algebras of type $\rA$.

\begin{defin}
 Let $\bH$ be a basic hereditary algebra with a quiver $\Ga$.
 \begin{enumerate}
 \item   We call a pair of vertices $(i,j)$ of the quiver $\Ga$ 
 \emph{exceptional} if they are contained in a full subquiver of the shape 
\begin{align}
&\xymatrix{ \ccdot \ar[r]^{\be} & {\str i\ccdot} &\ccdot \ar[l]_{\al_1} \ar@{-}[r]^{\al_2}& 
 \dots & \ccdot \ar@{-}[l]_{\al_{n-1}} & {\str j\ccdot}\ar[l]_{\al_n}  & \ccdot \ar[l]_{\ga} }
 \label{exc1} \\
 \intertext{or}
&\xymatrix{ \ccdot & {\str i\ccdot} \ar[l]_{\be} \ar[r]^{\al_1} &\ccdot \ar@{-}[r]^{\al_2}& 
 \dots & \ccdot \ar@{-}[l]_{\al_{n-1}}\ar[r]^{\al_n} & {\str j\ccdot} \ar[r]^{\ga} & \ccdot }
 \label{exc2} 
\end{align}
 where the orientation of the arrows $\al_2,\dots,\al_{n-1}$ is arbitrary. Possibly $n=1$,
 then $\al_1=\al_n:j\to i$ in case \eqref{exc1} and $\al_1=\al_n:i\to j$ in case \eqref{exc2}.

 \item We call gluing of an exceptional pair of vertices \emph{exceptional gluing}. 
 
 \item 
 A nodal algebra is said to be \emph{exceptional} if it is obtained from a 
 hereditary algebra of type $\rA$ by a suitable sequence of gluings consisting of one exceptional 
 gluing and, maybe, several inessential gluings.
 \end{enumerate}
 \end{defin} 
 
 Recall that inessential gluing does not imply the representations type of an algebra.
 So we need not take them into account only considering exceptional algebras obtained
 by a unique exceptional gluing. Note that such gluing results in the algebra $\bA$ 
 given by the quiver with relations
 \begin{align}
&\xymatrix{  && \ccdot \ar[dr]^{\al_1}\ar@{.}[rr] && 
 \ccdot &&&\hspace*{-6em} \al_n\al_1=\al_n\be=0\\
\ccdot \ar@{-}[r]^{\be_m}&\dots  &\ar@{-}[l]_{\be_1}
\ccdot \ar[r]^{\be} & {\str {(ij)}\ccdot} \ar[ur]^{\al_n}  & \ccdot \ar[l]_{\ga} 
 \ar@{-}[r]^{\ga_1}& \dots & \ccdot \ar@{-}[l]_{\ga_l} }
 \label{case1} \\
 \intertext{in case \eqref{exc1} or}
&\xymatrix{ && \ccdot \ar@{.}[rr] && \ccdot \ar[dl]_{\al_n} &&&
 \hspace*{-6em}\al_1\al_n=\be\al_n=0\\
\ccdot \ar@{-}[r]^{\be_m}&\dots  &\ar@{-}[l]_{\be_1}
\ccdot & {\str {(ij)}\ccdot} \ar[l]_{\be} \ar[ul]_{\al_1} \ar[r]^{\ga} & \ccdot 
 \ar@{-}[r]^{\ga_1}& \dots  &\ccdot \ar@{-}[l]_{\ga_l} }
 \label{case2} 
 \end{align}
 in case~\eqref{exc2}. The dotted line consists of the arrows 
 $\al_2,\dots,\al_{n-1}$; if $n=1$, we get a loop $\al$ at the vertex $(ij)$ with the
 relations $\al^2=0$ and, respectively, $\al\be=0$ or $\be\al=0$. We say that $\bA$ is an 
 $(n,m,l)$-\emph{exceptional algebra}. 

 We determine representation types of exceptional algebras. 
 
 \begin{theorem}\label{except} 
  An $(n,m,l)$-exceptional algebra is
  \begin{enumerate}
  \item representation finite in cases:
    \begin{enumerate}
    \item $m=l=0$,
    \item $l=0,\,m=1,\,n\le 3$,
    \item $l=0,2\le m\le3,\,n=1$,
    \item $m=0,\,l=1,\,n\le2$.
÷    \end{enumerate}
  \item tame in cases:
    \begin{enumerate}
    \item $l=0,\,m=1,\,n=4$,
    \item $l=0,\,m=2,\,n=2$,
    \item $l=0,\,m=4,\,n=1$,
    \item $m=0,\,l=1,\,n=3$.
    \end{enumerate}
  \item wild in all other cases.
  \end{enumerate}
 \end{theorem}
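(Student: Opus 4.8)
\section*{Proof proposal}

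The plan is to first strip the problem down to the two explicit quivers \eqref{case1} and \eqref{case2} and then reduce these to a single normal form. Since inessential gluings do not affect the representation type (Theorem~\ref{effect}), I may assume $\bA$ is given exactly by \eqref{case1} or \eqref{case2}. These two quivers are mutually opposite, so by Corollary~\ref{opposite} it suffices to treat case \eqref{exc1}, i.e. the quiver \eqref{case1}: a central vertex $(ij)$ carrying three incoming arrows $\al_1,\be,\ga$ and one outgoing arrow $\al_n$, together with a cycle of length $n$ closed by $\al_1$ (relation $\al_n\al_1=0$), a $\be$-arm of length $m{+}1$ (relation $\al_n\be=0$), and a $\ga$-arm of length $l{+}1$ with $\al_n\ga\neq0$. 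The orientations of the interior cycle arrows $\al_2,\dots,\al_{n-1}$ are arbitrary, so I would remove this freedom first by applying reflection functors (APR-tilts) at the interior cycle vertices that happen to be sources or sinks. No relation is incident to such a vertex, so each reflection is an equivalence preserving the representation type, and finitely many of them fix a single orientation. After these steps only the three integers $(n,m,l)$ remain.

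For the representation-finite and tame cases I would pass to the universal (Galois) cover $\widetilde\bA$. The zero relation $\al_n\al_1=0$ makes the cycle nilpotent, so unrolling it produces a \emph{directed}, locally bounded category: a ``fence/spider'' built from finite $\al$-segments, each carrying a copy of the $\be$- and $\ga$-arms. By the covering theorem, $\bA$ is representation-finite (resp. tame) exactly when $\widetilde\bA$ is locally representation-finite (resp. tame), and on the directed (strongly simply connected) cover the Tits--Euler quadratic form $q_{\widetilde\bA}$ is available, with weak positivity equivalent to local representation-finiteness and weak nonnegativity to tameness. Computing $q_{\widetilde\bA}$ on the unrolled diagram then reduces the trichotomy to comparing $(n,m,l)$ with the Dynkin/Euclidean (tubular) thresholds. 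This is where the numerics come from: when $l=0$ the criterion collapses to the product $mn$, with $mn<4$ giving a (super-)Dynkin form, $mn=4$ a Euclidean/tubular one, and $mn>4$ an indefinite one; the $\ga$-arm behaves differently precisely because $\al_n\ga\neq0$, so a $\ga$-arm composes through $\al_n$ and the cycle and contributes ``heavier'' segments, lowering its threshold and explaining the asymmetry between the $m$- and $l$-scales. In each tame boundary case I would still write down the one-parameter family explicitly --- the band, equivalently the homogeneous tube --- to certify tameness and that this family is the only one.

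For the wild cases the strategy is to inherit wildness from a subquotient. Both passing to a quotient algebra $\bA/I$ and restricting to the modules supported on a convex subquiver yield full exact embeddings into $\bA\md$, so it suffices to find a \emph{wild} algebra among these. For each remaining $(n,m,l)$ I would locate a known minimal wild configuration --- a wild hereditary star, a wild one-relation one-point (co)extension, or a faithful embedding of the representations of a wild quiver or of $\aK\gnr{x,y}$-modules --- just past each tame boundary: at $l=0$ with $mn>4$ (first new cases $(m,n)=(1,5),(2,3),(3,2),(5,1)$), at $m=0$ with $l=1$ and $n\ge4$, at $l\ge2$, and whenever both arms are simultaneously present. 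A monotonicity argument then finishes: enlarging any of $n,m,l$ only enlarges a convex subcategory that is already wild, so wildness persists, and ``all other cases'' reduce to a short explicit list of witnesses.

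The main obstacle is the sharp tame/wild boundary, i.e. pinning the constants exactly. Proving wildness \emph{just past} each threshold --- the step $mn=4\to mn>4$, its $\ga$-side analogue, and the two-arm case --- requires honest wild embeddings produced uniformly, together with the monotonicity that lets finitely many witnesses cover the whole wild region; conversely, tameness \emph{exactly at} the threshold demands that the explicit family be shown to be the \emph{only} one. I also expect the orientation-independence reduction and the bookkeeping on $\widetilde\bA$ --- ensuring the Tits form is computed on the correct directed category and that covering transfers all three types --- to be the fussiest technical points, even though each is individually routine.
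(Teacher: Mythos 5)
Your strategy is genuinely different from the paper's. The paper uses neither covers nor the Tits form: it observes that the cyclic part of \eqref{case1}/\eqref{case2} is obtained from the linear quiver $\Ga_n$ by an \emph{inessential} gluing, so by Theorem~\ref{effect} the restriction of any module to that part decomposes into explicitly known interval modules; the arrows $\be$ and $\ga$ then become block matrices whose admissible transformations are governed by the hom-order on those intervals, and the whole classification reduces to representations of the poset $\dS_{n+2}$, of a three-column poset, and of pairs of posets, for which the finite/tame/wild lists of Kleiner, Nazarova and Nazarova--Roiter give exactly the numbers in the statement. This also disposes of the arbitrary orientation of $\al_2,\dots,\al_{n-1}$ for free, since interval modules over $\Ga_n$ do not depend on orientation. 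Your covering/Tits-form route is plausible in outline (and your observation that the $l=0$ trichotomy is governed by $mn$ versus $4$ is consistent with the statement), but as written it is a plan rather than a proof: all of the numerology --- which is the entire content of the theorem --- is deferred to an uncomputed quadratic form, and the criteria you invoke (weak positivity iff locally representation-finite, weak nonnegativity iff tame, descent of tameness along the cover) are theorems only under hypotheses (a preprojective component or directedness, strong simple connectedness, local support-finiteness for the Dowbor--Skowro\'nski descent) that you would have to verify on the infinite cover before any of this applies.

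There is also one step that fails as stated: the monotonicity of wildness in $n$. Killing the idempotents at the outer arm vertices exhibits the shorter-arm algebra as a quotient, so wildness does propagate up in $m$ and $l$ from finitely many witnesses; but the $(n',m,l)$-algebra with $n'<n$ is neither a quotient nor a convex subcategory of the $(n,m,l)$-algebra --- deleting an interior cycle vertex breaks the cycle and changes the relation structure --- so a single witness at, say, $(m,n)=(1,5)$ does not cover $m=1$, $n\ge6$. You would need either a uniform wild embedding for every $n$ past the threshold or a reduction in which the dependence on $n$ is monotone; the paper gets the latter automatically, because $\dS_{n'+2}$ is a full subposet of $\dS_{n+2}$ and representations of a subposet embed fully and exactly into those of the larger poset. (A minor point: to pass between \eqref{case1} and \eqref{case2} you want the duality $\hom_\aK(-,\aK)$, not Corollary~\ref{opposite}, which only asserts that the opposite algebra is again nodal.)
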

 \begin{proof}
  We consider an algebra $\bA$ given by the quiver with relations~\eqref{case2}.  Denote by $\bC$ 
  the algebra given by the quiver with relations
  \[
   \xymatrix@R=1ex@C=2em{ \ccdot \ar@{.}[rr] && \ccdot \ar[dl]^{\al_n}
   				&       \al_1\al_n=0 \\
   				& \bullet \ar[ul]^{\al_1}  }
  \]
  (the bullet shows the vertex $(ij)$).
  It is obtained from the path algebra of the quiver
  \[
  \Ga_n= \xymatrix{\str0\ccdot \ar[r]^{\al_1} & \str1\ccdot \ar@{-}[r]^{\al_2} &\dots &
   			\str{n-1}\ccdot \ar@{-}[l]_{\al_{n-1}} \ar[r]^{\al_n} & \str n\ccdot }
  \] 
  by gluing vertices $0$ and $n$. This gluing is inessential, so we can use Theorem~\ref{effect}.
  We are interested in the indecomposable representations of $\bC$ that are non-zero at the 
  vertex $\bullet$\,. We denote by $\dL$ the set of such representations.
  They arise from the representations of the quiver $\Ga_n$ that are non-zero at the vertex 
  $0$ or $n$. Such representations are $\ti L_i$ and $\ti L'_i$ ($0\le i\le n$), where
  \begin{align*}
  \ti L_i(k)&\begin{cases}
    \aK &\text{ if } k\le i,\\
    0 &\text{ if } k>i;
  \end{cases} \\
  \ti L'_i(k)&=\begin{cases}
    \aK &\text{ if } k\ge i,\\
    0 & \text{ if } k<i.
  \end{cases}
  \end{align*}
  We denote by $L_i$ and $L'_i$ respectively the representations  $\bF\ti L_i$ and $\bF\ti L'_i$ 
  (see page \pageref{FM}, formulae \eqref{FM}). Obviously $L_n=L'_0$, $L_0=L'_n=\ol\bC_{\bullet}$ and
  $\dim_\aK L_i(\bullet)=1$ for $i\ne n$ as well as $\dim_\aK L'_i(\bullet)=1$ for $i\ne0$, while
  $\dim_\aK L_n(\bullet)=2$. We denote by $e_i\ (0\le i<n)$ and $e'_j\ (0<i<n)$ basic vectors 
  respectively of $L_i$ and $L'_j$, and by $e_n,e'_n$ basic vectors of $L_n=L'_0$ such that 
  $e'_n\in\im L_n(\al_n)$; then $L_n(\al_1)(e'_n)=0$, while $L_n(\al_1)(e_n)\ne0$. We consider
  the set $\bE=\set{\lsto en}$ and the relation $\prec$ on $\bE$, where $u\prec v$ means that   
  there is a homomorphism $f$ such that $f(u)=v$. From the well-known (and elementary) description 
  of representations of the quiver $\Ga$ and Theorem~\ref{effect} it follows that $\prec$ is a linear
  order and $e_i\prec e_0\prec e'_j$ for all $i,j$. Let $\lsto u{2n}$ be such a numeration of
  the elements of $\bE$ that $u_i\prec u_j$ \iff $i\le j$ (then $u_n=e_0$), and $e_n=u_k,e'_n=u_l$ 
  ($k<n<l$). Note also that if $f\in\End L_n$, the matrix $f(\bullet)$ in the basis $e_n,e'_n$ 
  is of the  shape $\mtr{\la&0\\ \mu&\la}$.
  
  Let $M$ be an $\bA$-module, $\oM$ be its restriction onto $\bC$. 
  Then $\oM\simeq\bop_im_iL_i\+\bop_jm'_jL'_j$, where
  $M_i\simeq $ and $M'_j\simeq $. Respectively $M(\bullet)=\bop_{i=1}^{2n+1}U_i$,
  where $U_i$ is generated by the images of the vectors $u_i$. Note that, for $i>n$, $u_i=e'_j$
  for some $j$, so $M(\be)U_i=0$. Therefore, the matrices $M(\be)$ and $M(\ga)$ shall be 
  considered as block matrices
  \begin{equation}
  \begin{split}
  M(\be)&=\mtr{ B_0 & B_1& \dots & B_n & 0 & \dots & 0},\\
  M(\ga)&=\mtr{ C_0 & C_1 & \dots & C_n & C_{n+1} & \dots & C_{2n}},
  \end{split}
  \end{equation}
  where the matrices $B_i,C_i$ correspond to the summands $U_i$. If $f\in\hom_\bA(M,N)$, 
  then $f(\bullet)$ is a block lower triangular matrix $(f_{ij})$, where $f_{ij}:U_j\to U_i$
  and $f_{ij}=0$ if $i<j$. Moreover, the non-zero blocks can be arbitrary with the only
  condition that $f_{kk}=f_{ll}$. Hence given any matrix $f(\bullet)$ with this condition and 
  invertible diagonal blocks $f_{ii}$, we can construct a module $N$ isomorphic to $M$ just 
  setting $N(\be)=M(\be)f(\bullet),\ N(\ga)=M(\ga)f(\bullet)$. Then one can easily transform 
  the matrix $M(\ga)$ so that there is at most one non-zero element (equal $1$) in every row and 
  in every column, if $i\notin\{k,l\}$, the non-zero rows of $C_i$ have the form $\mtr{I&0}$ 
  and the non-zero rows of the matrix $(C_k\,|\,C_l)$ are of the form
  \[
  \left[
  \begin{array}{cccc|cccc}
   0&0&0&0 & I&0&0&0 \\	0&0&0&0 & 0&I&0&0 \\
   0&0&I&0 & 0&0&0&0 \\ I&0&0&0 & 0&0&0&0 
  \end{array}
  \right]
  \]
  We subdivide the columns of the matrices $B_i$ respectively to this subdivision of $C_i$.
  It gives $2(n+2)$ new blocks $\tB_s (1\le s\le 2(n+2))$. Namely, the blocks $\tB_s$ with $s$
  odd correspond to the non-zero blocks of the matrices $C_i$ and those with $s$ even correspond
  to zero columns of $C_i$. Two extra blocks come from the subdivision of $C_k$ into
  $4$ vertical stripes. We also subdivide the blocks 
  $f_{ij}$ of the matrix $f(\bullet)$ in the analogous  way. 
  From now on we only consider such representations that the matrix $M(\ga)$ is of the form reduced 
  in this way. One can easily check that it imposes the restrictions on the matrix 
  $f(\bullet)$ so that the new blocks $\tF_{st}$ obtained from $f_{ij}$ with 
  $0\le i,j\le n$ can only be non-zero (and then arbitrary) \iff $s>t$ and, moreover, 
  either $t$ is odd or $s$ is even. It means that these 
  new blocks can be considered as a representation of the \emph{poset} 
  (partially ordered set) $\dS_{n+2}$: \label{ds}
  \[
   \xymatrix{ & 1 \ar@{-}[dl] \ar@{-}[d] \\ 2 \ar@{-}[d] &   3 \ar@{-}[dl] \ar@{-}[d] \\ 
   4 \ar@{-}[d] &	 \vdots \ar@{-}[dl] \ar@{-}[d] \\ \vdots \ar@{-}[d]  & 2n+3 \ar@{-}[dl]  \\ 
   2(n+2) } 
  \]
  (in the sense of \cite{nr}). It is well-known \cite{nr} that $\dS_{n+2}$ has finitely 
  many indecomposable representations. It implies that $\bA$ is representation finite if $m=l=0$.
  
  If $l=1$, let $\ga:j\to j_1,\,\ga_1:j_2\to j_1$ (the case $\ga_1:j_1\to j_2$ is analogous). 
  We can suppose that the matrix $M(\ga_1)$ is of the form $\mtr{I&0\\0&0}$. Then the rows
  of all matrices $C_i$ shall be subdivided respectively to this division of $M(\ga_1)$:
  $C_i=\mtr{C_{i1}\\C_{i0}}$. Moreover, if $f$ is a homomorphism of such representations, then 
  $f(j_1)=\mtr{c_1&c_2\\0&c_3}$. Quite analogously to the previous considerations one can see that,
  reducing $M(\ga)$ to a canonical form, we subdivide the columns of $B_i$ so that resulting problem
  becomes that of representations of the poset $\dC'_{n+3}$:
  \[
    \xymatrix{ && \ccdot \ar@{-}[d] \ar@{-}[dl] \\
    			& \ccdot \ar@{-}[d] \ar@{-}[dl] & \ccdot \ar@{-}[d] \ar@{-}[dl] \\
    	 \ccdot \ar@{-}[d] & \ccdot \ar@{-}[d] \ar@{-}[dl] &\vdots \ar@{-}[d] \ar@{-}[dl] \\
		 \ccdot \ar@{-}[d] & \vdots \ar@{-}[d] \ar@{-}[dl] &\ccdot  \ar@{-}[dl] \\			 						\vdots \ar@{-}[d] &\ccdot\ar@{-}[dl]  \\
		 \ccdot }
  \]
  ($n+3$ points in each column). The results of \cite{kl,na} imply that this problem is finite
  if $n\le 2$, tame if $n=3$ and wild if $n>3$. Therefore, so is the algebra $\bA$ if $m=0$ and
  $l=1$. 
  
  If $l>1$ then after a reduction of the matrices $M(\ga_2),M(\ga_1)$ and $M(\ga)$ we obtain
  for $M(\be)$ the problem of the representations of the poset $\dS''_{n+4}$ analogous to 
  $\dS$ and $\dS'$ but with $4$ columns and $n+4$ point in every column. This problem is
  wild \cite{na}, hence the algebra $\bA$ is also wild. If both $l>0$ and $m>0$, analogous 
  consideration shows that if we reduce the matrix $M(\be_1)$ to the form $\mtr{I&0\\0&0}$, 
  the rows of the matrix $M(\be)$ will also be subdivided, so that we obtain the problem on
  representations of a \emph{pair of posets} \cite{kl}, one of them being $\dS'_{n+3}$ and 
  the other being linear ordered with $2$ elements. It is known from \cite{kl1} that this 
  problem is wild, so the algebra $\bA$ is wild as well.
  
  Let now $l=0$ and fix the subdivision of columns of $M(\be)$ described by the poset 
  $\dS_{n+2}$ as above. If we reduce the matrices $M(\be_i)$, which form representations of the
  quiver of type $\rA_m$, the rows of $M(\be)$ will be subdivided so that as a result we obtain
  representations of the pair of posets, one of them being $\dS_{n+2}$ and the other being
  linear ordered with $m+1$ elements. The results of \cite{kl,kl1} imply that this problem is
  representation finite if either $m=1,\,n\le3$ or $2\le m\le3,\,n=1$, tame if either 
  $m=1,\,n=4$, or $m=2,\,n=2$, or $m=4,\,n=1$. In all other cases it is wild. Therefore, the 
  same is true for the algebra $\bA$, which accomplishes the proof.
  \end{proof}
  
  We use one more class of algebras.
  
  \begin{defin}\label{super} 
   A nodal algebra is said to be \emph{super-exceptional} if it is obtained from an algebra of the 
   form \eqref{case1} or \eqref{case2} with $n=3$ by gluing the ends of the arrow $\al_2$ in the case 
   when such gluing is not inessential, and, maybe, several inessential gluings.
  \end{defin}
  
  Obviously, we only have to consider super-exceptional algebras obtained without inessential gluings.
  Using \cite[Theorem 2.3]{kl1}, one easily gets the following result.
  
 \begin{prop}\label{sup-type} 
   A super-exceptional algebra is 
 \begin{enumerate}
 \item representation finite if $m=l=0$,
 \item tame if $m+l=1$,
 \item wild if $m+l>1$.
 \end{enumerate}
   \end{prop}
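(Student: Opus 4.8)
The plan is to rerun the matrix reduction from the proof of Theorem~\ref{except}, now for the
 algebra produced by the additional gluing of the ends of $\al_2$. First I would discard the
 inessential gluings allowed in Definition~\ref{super}: by Theorem~\ref{effect} they do not
 affect the representation type, so it suffices to treat a super-exceptional algebra $\bA$
 obtained by the single extra gluing from a $(3,m,l)$-exceptional algebra $\bB$ of the form
 \eqref{case1} or \eqref{case2} with $n=3$. Since the whole construction is left--right
 symmetric, one may consider only case \eqref{case2}, passing to the opposite algebra in case
 \eqref{case1}. As in Theorem~\ref{except}, the exceptional gluing defining $\bB$ is itself
 inessential, so I would again pass to the path algebra of $\Ga_3$, keep those of its
 representations that are non-zero at the relevant vertices, and record the admissible
 homomorphisms by the poset $\dS_5=\dS_{3+2}$ of that proof.

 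The genuinely new ingredient is the gluing of the two ends of $\al_2$, which for $n=3$ is the
 single inner arrow of the $\al$-chain. By Proposition~\ref{relations} this identifies the two
 inner vertices into one vertex, makes $\al_2$ a loop with $\al_2^2=0$, and adds the
 corresponding zero relation between the two ``around'' paths. The next step is to show that,
 inside the reduced matrix problem, this identification merely superimposes a square-zero datum
 on the two blocks of $\dS_5$ attached to the inner vertices, i.e.\ it replaces $\dS_5$ by a
 poset carrying one additional prescribed relation of the kind handled in \cite{kl1}. The $\be$-
 and $\ga$-branches are then reduced exactly as before: bringing the arrows $\be_i$ to canonical
 form subdivides the rows of $M(\be)$ into a linearly ordered chain of $m+1$ blocks, and bringing
 the $\ga_i$ to canonical form subdivides the rows of $M(\ga)$ into a linearly ordered chain of
 $l+1$ blocks.

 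Assembling these reductions, the description of the $\bA$-modules becomes a single matrix
 problem of exactly the type treated in \cite[Theorem~2.3]{kl1}: a core poset coming from the
 glued $\al$-chain, augmented by two linearly ordered chains of lengths $m+1$ and $l+1$
 contributed by the $\be$- and $\ga$-branches. The representation type is then read off directly
 from \cite[Theorem~2.3]{kl1}, which places this problem precisely at the stated boundary ---
 representation finite when no extra points are present ($m=l=0$), tame when a single point is
 added to either chain ($m+l=1$), and wild as soon as a further point appears ($m+l>1$); the
 symmetry of this answer in $m$ and $l$ reflects the fact that, after the gluing of $\al_2$, the
 two branches enter the matrix problem on the same footing.

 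The step I expect to be the main obstacle is the middle one: checking that the gluing of the
 ends of $\al_2$ translates, after the canonical reductions of $M(\be)$ and $M(\ga)$, into
 exactly the extra relation required by the hypotheses of \cite[Theorem~2.3]{kl1}, and in
 particular that the square-zero loop $\al_2$ creates no free parameters beyond those already
 accounted for by the poset. Once this identification is secured, the proposition is immediate
 from the cited theorem, which is why it can be obtained so easily.
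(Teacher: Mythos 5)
The paper itself gives no argument here beyond the remark that inessential gluings can be discarded and a citation of \cite[Theorem 2.3]{kl1}, so at the level of strategy your plan (strip the inessential gluings via Theorem~\ref{effect}, reduce to a matrix problem, invoke Kleiner) is the intended one. However, your write-up leaves precisely the load-bearing step undone, and the way you describe that step would not work. Kleiner's \cite[Theorem 2.3]{kl1} classifies \emph{pairs of posets} --- block matrices whose column strips are indexed by one poset and whose row strips by another --- it does not treat ``a poset carrying one additional prescribed relation,'' which is how you propose to encode the gluing of the ends of $\al_2$. The square-zero loop $\al_2$ has to be reduced first (Jordan form of a nilpotent of index $2$, splitting $M$ at the glued inner vertex into three blocks with a triangular endomorphism algebra); tracking how this interacts with $\al_1$, $\al_3$ and the relations $\al_1\al_3=\al_3\al_1=\al_2^2=\be\al_3=0$ is what replaces $\dS_5$ by a \emph{different poset}, not by $\dS_5$ with a relation. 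You flag this as ``the main obstacle'' and then do not resolve it, so the proof is not complete. Relatedly, when both $m>0$ and $l>0$ your ``core poset augmented by two chains'' is not literally a pair of posets either; as in the proof of Theorem~\ref{except}, one branch must be absorbed into enlarging one poset of the pair while the other branch supplies the second poset (or one simply exhibits a wild sub-problem).

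Two further points. First, the asserted $m\leftrightarrow l$ symmetry (``the two branches enter the matrix problem on the same footing'') is not justified and is suspicious on its face: in Theorem~\ref{except} the $\be$- and $\ga$-branches act on rows and columns respectively, play genuinely different roles because of the relation $\be\al_n=0$, and the resulting type boundaries there are \emph{not} symmetric in $m$ and $l$. The symmetric answer in Proposition~\ref{sup-type} is a conclusion of the computation, not something you may assume. Second, the sentence ``the exceptional gluing defining $\bB$ is itself inessential'' is false as stated: gluing $i$ and $j$ in \eqref{exc2} is obstructed by $\ga$ starting at $j$ and $\al_n$ ending at $j$. What is inessential is only its restriction to the subalgebra $\bC$ generated by the $\al$-cycle, which is exactly why the full algebra produces a nontrivial matrix problem for $\be$ and $\ga$. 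With these corrections the argument can be pushed through to the stated boundaries, but as written the proposal is a plan with its central verification missing.
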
  
  
  \section{Final result}
  \label{sec5}
  
  Now we can completely describe representation types of nodal algebras of type $\rA$.

 \begin{defin}\label{quasi} 
  \begin{enumerate}
  \item   We call an algebra $\bA$ \emph{quasi-gentle} if it can be obtained from a gentle or
  skewed-gentle algebra by a suitable sequence of inessential gluings.
  \item  We call an algebra \emph{good exceptional} (\emph{good super-exceptional}) if it is 
  exceptional (respectively, super-exceptional) and not wild.
  \end{enumerate}
 \end{defin}

  Theorem~\ref{except} and Proposition~\ref{sup-type} give a description of good exceptional 
  and super-exceptional algebras.
  
\begin{theorem}\label{main} 
 A non-hereditary nodal algebra of type $\rA$ is representation finite or 
 tame \iff it is either quasi-gentle, or good exceptional, or good super-exceptional. 
 In other cases it is wild.
\end{theorem}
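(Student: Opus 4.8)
The plan is to establish the two implications separately, with the bulk of the work in showing that a non-wild algebra must be quasi-gentle, good exceptional, or good super-exceptional. The converse is immediate from the preparatory results: a quasi-gentle algebra becomes gentle or skewed-gentle once its inessential gluings are discarded, hence is tame by the classical tameness of (skewed-)gentle algebras together with the category equivalence of Theorem~\ref{effect}; and a good exceptional or good super-exceptional algebra is non-wild by the definition of ``good,'' the admissible parameters being exactly those supplied by Theorem~\ref{except} and Proposition~\ref{sup-type}.

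For the forward implication I would argue by contraposition. By Theorem~\ref{description}, $\bA$ is obtained from a hereditary algebra $\bH$ of type $\rA$ or $\tA$---hence a disjoint union of linear chains and oriented cycles---through a suitable sequence of gluings and blowings up. Since inessential gluings do not alter the representation type (Theorem~\ref{effect}), I would first strip them off and study the \emph{core} algebra defined by the remaining essential operations, reinstating the inessential ones at the end under whichever of the three ``good'' labels applies, each of which admits them by definition.

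I would then classify the essential operations through their interaction with the local arrow structure, exploiting that in type $\rA$ every vertex meets at most two arrows. An operation at a vertex bearing at most one arrow on each side is precisely the (skewed-)gentle situation of Theorem~\ref{gentle}; if all essential operations are of this kind the core, and hence $\bA$, is quasi-gentle. The first genuine complication is an essential gluing at a vertex carrying two arrows on one side: the chain/cycle structure forces it to close a connecting path into a cycle, producing exactly the quiver with relations \eqref{case1} or \eqref{case2}, so the core is $(n,m,l)$-exceptional and its type is decided by Theorem~\ref{except}. The single further non-wild possibility is a super-exceptional core, arising from the special second gluing along $\al_2$ on an $n=3$ exceptional algebra, whose type is settled by Proposition~\ref{sup-type}.

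The crux, and the step I expect to be the main obstacle, is to verify that every core outside these three shapes is wild. The strategy is to exhibit inside each such algebra one of the wild matrix problems already encountered in the proof of Theorem~\ref{except}---a wild poset such as $\dS''_{n+4}$, or a wild pair of posets---after performing the same reductions of the matrices $M(\be)$ and $M(\ga)$. I expect the offending cores to be exactly those carrying more than one complication (two complex gluings, or a complex gluing together with any blowing up), a blowing up at a complex vertex, or a single exceptional/super-exceptional pattern whose parameters lie strictly beyond the non-wild ranges of Theorem~\ref{except} and Proposition~\ref{sup-type}. The delicate part is the boundary bookkeeping---confirming that the tame/wild frontier delivered by each reduction coincides precisely with those parameter lists, so that no non-wild algebra is missed and no wild one is wrongly admitted.
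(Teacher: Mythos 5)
Your plan follows the paper's proof essentially step for step: the ``if'' direction is drawn from Theorems~\ref{gentle}, \ref{except} and Proposition~\ref{sup-type}, and the ``only if'' direction proceeds exactly as in the paper by discarding inessential gluings, locating the one exceptional gluing forced by non-quasi-gentleness, and showing that every further essential gluing or blowing up (and every out-of-range parameter list) reduces to a wild poset or pair-of-posets problem. The only ingredient you leave tacit is Proposition~\ref{embed}/Corollary~\ref{improve} --- the exact functor showing that gluing and blowing up cannot improve representation type --- which is precisely what legitimises your step of ``exhibiting a wild matrix problem inside'' the algebra.
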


 Before proving this theorem, we show that gluing or blowing up cannot ``improve'' representation 
 type.
 
 \begin{prop}\label{embed} 
 Let an algebra $\bA$ be obtained from $\bB$ by gluing or blowing up. Then there is an exact functor
 $\bF:\bB\md\to\bA\md$ such that $\bF M\simeq\bF M'$ \iff $M\simeq M'$ or, in case of gluing  
 vertices $i$ and $j$, $M$ and $M'$ only differ by trivial direct summands at these vertices. 
 \end{prop}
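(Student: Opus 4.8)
The plan is to treat the two cases --- gluing and blowing up --- separately, in each producing an explicit functor and, in order to detect isomorphisms, an explicit partial inverse.

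For the case of gluing the vertices $i$ and $j$, I would simply reuse the functor $\bF$ constructed in the proof of Theorem~\ref{effect} (formulae~\eqref{FM}). Since at every vertex $k\ne(ij)$ it leaves the spaces and maps untouched, while at $(ij)$ it forms the direct sum $M(i)\+M(j)$ with a block-diagonal action on morphisms, a short exact sequence of $\bB$-modules is carried to a sequence exact at each vertex: the original one off $(ij)$, and the direct sum of the sequences at $i$ and at $j$ at the vertex $(ij)$. Hence $\bF$ is exact. To reflect isomorphisms I would bring in the companion functor $\bG$ of Theorem~\ref{effect} together with the natural transformation $\phi:\id_{\bB\md}\to\bG\bF$, which is an isomorphism on every module having no direct summand $\oB_i$ or $\oB_j$.

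Writing $M=M_0\+a\oB_i\+b\oB_j$ and $M'=M'_0\+a'\oB_i\+b'\oB_j$ with $M_0,M'_0$ free of such summands, the additivity of $\bF$ and the identities $\bF\oB_i=\bF\oB_j=\oA_{ij}$ give $\bF M\simeq\bF M_0\+(a+b)\oA_{ij}$ and likewise for $M'$. Applying $\bG$, which annihilates $\oA_{ij}$, and using $\phi$, an isomorphism $\bF M\simeq\bF M'$ yields $M_0\simeq\bG\bF M_0\simeq\bG\bF M'_0\simeq M'_0$; cancelling the common summand $\bF M_0\simeq\bF M'_0$ by Krull--Schmidt then forces $a+b=a'+b'$. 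Thus $\bF M\simeq\bF M'$ holds exactly when $M_0\simeq M'_0$ and the combined number of trivial summands at $i$ and $j$ agrees, which is precisely the asserted ``$M$ and $M'$ differ only by trivial summands at these vertices.''

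For the case of blowing up a vertex $i$, with the quiver of $\bA$ described in Proposition~\ref{relations}(2), I would define $\bF$ by duplication along the diagonal: set $\bF M(k)=M(k)$ for $k\notin\{i',i''\}$, put $\bF M(i')=\bF M(i'')=M(i)$, and send each of the two arrows $\al',\al''$ (resp.\ $\be',\be''$) that replace an arrow $\al$ (resp.\ $\be$) at $i$ to the single map $M(\al)$ (resp.\ $M(\be)$). Each relation $\al'\be'=\al''\be''$ then reads $M(\al)M(\be)=M(\al)M(\be)$, and each split relation $\nR',\nR''$ evaluates to $M(\nR)=0$, so $\bF M$ is a genuine $\bA$-module; exactness is immediate since $\bF$ only copies spaces and maps. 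Here reflecting isomorphisms is direct: the retraction $\bG'$ given by $\bG' N(i)=N(i')$ and $\bG' N(\ga)=N(\ga')$ for arrows $\ga$ at $i$ (and the identity elsewhere) is a functor --- it respects the relations of $\bB$, as $\nR$ turns into the valid relation $\nR'$ of $\bA$ --- and satisfies $\bG'\bF\cong\id_{\bB\md}$, whence $\bF M\simeq\bF M'$ forces $M\simeq M'$. The routine checks (that $\bF$ and $\bG'$ respect relations, and that the block sequences are exact) are mechanical; the only genuinely delicate point is the bookkeeping of trivial summands in the gluing case, where one must observe that $\bF$ merges the two simples $\oB_i,\oB_j$ into the single simple $\oA_{ij}$ and invoke Krull--Schmidt cancellation to pin down the total multiplicity $a+b$ rather than $a$ and $b$ separately.
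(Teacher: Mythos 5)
Your treatment of the blowing-up case is correct and coincides with the paper's: the same duplication functor $\bF$, the same retraction $\bG'$ with $\bG'\bF\cong\id$, the same (implicit) restriction to vertices without loops. The problem is the gluing case. Proposition~\ref{embed} must cover \emph{arbitrary} gluings --- indeed, in the proof of Theorem~\ref{main} it is applied precisely to essential (e.g.\ exceptional) gluings --- whereas the functors $\bF$, $\bG$ and the natural transformation $\phi:\id\to\bG\bF$ of Theorem~\ref{effect} are constructed and proved to work only under the inessentiality hypothesis (no arrows ending at $i$, none starting at $j$). Already the formulae \eqref{FM} do not define a functor in general: for an arrow $\be$ ending at $i$ they prescribe a row matrix $\mtr{M(\be)&0}$ where a column is needed. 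More seriously, even after extending $\bF$ in the obvious way (as the paper does), the claim that $\phi_M:M\to\bG\bF M$ is an isomorphism for $M$ without trivial summands at $i,j$ fails for essential gluings. Concretely, in the situation of \eqref{exc1} take $M$ to be the indecomposable projective at the source of $\be$, so $M(i)=\aK$, $M(j)=0$ and $M(\al_n)=0$; then every arrow starting at $(ij)$ kills all of $\bF M(ij)$, so $\bG\bF M(i)=\bF M(ij)/\bap_{\al^-=(ij)}\Ker\bF M(\al)=0\neq M(i)$. If your argument did go through for all gluings it would give an equivalence $\bB\md/\gnr{\oB_i,\oB_j}\simeq\bA\md/\gnr{\oA_{ij}}$ in general, i.e.\ gluing could never change the representation type --- contradicting the main results of the paper.

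What the paper does instead, and what your proof is missing, is a direct verification that the (correctly extended) $\bF$ reflects isomorphisms without constructing a quasi-inverse. Given $\phi:\bF M\ito\bF M'$ with inverse $\psi$, write $\phi(ij)$ and $\psi(ij)$ as $2\times2$ block matrices with respect to $M(i)\+M(j)$. Compatibility with the arrows forces $\phi_{21}M(\be)=0$ for $\be$ ending at $i$, $\phi_{12}M(\be)=0$ for $\be$ ending at $j$, $M'(\al)\phi_{12}=0$ for $\al$ starting at $i$, etc. The hypothesis that $M$ and $M'$ have no direct summands $\oB_i,\oB_j$ translates into $\bap_{\al^-=i}\Ker M(\al)\sbe\sum_{\be^+=i}\im M(\be)$ (and likewise at $j$); combining these one gets $\phi_{12}\psi_{21}=0$, hence $\phi_{11}\psi_{11}=1$ and $\phi_{22}\psi_{22}=1$, and the diagonal blocks assemble into an isomorphism $M\ito M'$. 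You would need to supply this (or an equivalent) argument; the appeal to Theorem~\ref{effect} cannot replace it.
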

 \begin{proof}
  Let $\bA$ is obtained by blowing up a vertex $i$. We suppose that there are no loops at this
  vertex. The case when there are such loops can be treated analogously but the formulae become
  more cumbersome. Note that in the further consideration we do not need this case. For a $\bB$-module 
  $M$ set $\bF M(k)=M(k)$ if $k\ne i$, $\bF M(i')=\bF M(i'')=M(i)$, $\bF M(\al)=M(\al)$ if 
  $\al\notin\Ar(i)$ and $\bF M(\al')=\bF M(\al'')=M(\al)$ if $\al\in\Ar(i)$. 
  If $f:M\to M'$, set
  $\bF f(k)=f(k)$ if $k\ne i$ and $\bF f(i')=\bF f(i'')=f(i)$. It gives an exact functor 
  $\bF:\bB\md\to\bA\md$. Conversely, if $N$ is an $\bA$-module, set $\bG N(k)=N(k)$ if $k\ne i$
  and $\bG N(i)=N(i')$. It gives a functor $\bG:\bA\md\to\bB\md$. Obviously $\bG\bF M\simeq M$,
  hence $\bF M\simeq \bF M'$ implies that $M\simeq M'$.
  
  Let now $\bA$ be obtained from $\bB$ by gluing vertices $i$ and $j$. As above, we suppose that
  there are no loops at these vertices. For a $\bB$-module $M$ set $\bF M(k)=M(k)$ if $k\ne(ij)$, 
  $\bF M(ij)=M(i)\+M(j)$, $\bF M(\al)=M(\al))$ if $\al\notin\Ar(i)\cup\Ar(j)$, 
  $\bF M(\al)=\mtr{M(\al)&0}$ \Big(or $\mtr{0&M(\al)}$\Big) if $\al^-=i$ 
  (respectively $\al^-=j$) and
  $\bF M(\be)=\mtr{M(\be)\\0}$ \Big(or $M(\be)=\mtr{0\\M(\be)}$\Big) if $\be^+=i$ (respectively 
  $\be^+=j$). If $f:M\to M'$, set $\bF f(k)=f(k)$ if $k\ne(ij)$ and $f(ij)=f(i)\+f(j)$. 
  It gives an exact functor $\bF:\bB\md\to\bA\md$. Suppose that $\phi:\bF M\ito \bF M'$,
 \begin{align*}
 \phi(ij)&=\mtr{\phi_{11}&\phi_{12}\\\phi_{21}&\phi_{22}},\\
 \phi^{-1}(ij)&=\mtr{\psi_{11}&\psi_{12}\\\psi_{21}&\psi_{22}}.
 \end{align*} 
 Then
 \begin{align*}
 \phi_{11}M(\be)&=M'(\be)\phi(k) \text{ and } \phi_{21}M(\be)=0 \text{ if } \be:k\to i,\\
 \phi_{22}M(\be)&=M'(\be)\phi(k) \text{ and } \phi_{12}M(\be)=0 \text{ if } \be:k\to j,\\
 M'(\al)\phi_{11}&=\phi(k)M(\al) \text{ and } M'(\al)\phi_{12}=0 \text{ if } \al:i\to k,\\
 M'(\al)\phi_{22}&=\phi(k)M(\al) \text{ and } M'(\al)\phi_{21}=0 \text{ if } \al:j\to k.
 \end{align*}
 and analogous relations hold for the components of $\phi^{-1}(ij)$ with interchange of $M$
 and $M'$. We suppose that $M$ has no direct summands $\oB_i$ and $\oB_j$.
 It immediately implies that $\bap_{\al^-=i}\Ker M(\al)\sbe\sum_{\be^+=i}\im M(\be)$ and
 $\bap_{\al^-=j}\Ker M(\al)\sbe\sum_{\be^+=j}\im M(\be)$. If $M'$ also contains no direct 
 summands $\oB_i$ and $\oB_j$, it satisfies the same conditions. Therefore
 \[
 \textstyle
 \im\psi_{21}\sbe\bap_{\al^-=j}\Ker M(\al)\sbe \sum_{\be^+=j}\im M(\be),
 \]
 whence $\phi_{12}\psi_{21}=0$ and $\phi_{11}\psi_{11}=1$. Quite analogously, 
 $\phi_{22}\psi_{22}=1$ and the same holds if we interchange $\phi$ and $\psi$. Therefore
 we obtain an isomorphism $\tilde\phi:M\ito M'$ setting $\tilde\phi(i)=\phi_{11}$,
 $\tilde\phi(j)=\phi_{22}$ and $\tilde\phi(k)=\phi(k)$ if $k\notin\{i,j\}$.
 \end{proof}
 
 \begin{corol}\label{improve} 
 If an algebra $\bA$ is obtained from $\bB$ by gluing or blowing up and $\bB$ is representation 
 infinite or wild, then so is also $\bA$.
 \end{corol}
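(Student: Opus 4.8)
The plan is to deduce everything from the exact functor $\bF\colon\bB\md\to\bA\md$ produced in Proposition~\ref{embed}, keeping its notation. That proposition already tells us $\bF$ \emph{reflects} isomorphisms: $\bF M\simeq\bF M'$ forces $M\simeq M'$, except that in the gluing case $M$ and $M'$ may differ by direct summands isomorphic to $\oB_i$ or $\oB_j$. Reflecting isomorphisms is by itself too weak to transport representation type, so the core of the argument will be to show that $\bF$ also \emph{preserves indecomposability} of every indecomposable $\bB$-module other than the finitely many ``trivial'' ones $\oB_i$ (and $\oB_j$) at the glued or blown-up vertices. Once this is in hand, both implications are immediate. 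If $\bB$ is representation infinite, its infinitely many pairwise non-isomorphic indecomposables (all but two of which avoid the excluded modules) are carried to infinitely many pairwise non-isomorphic indecomposable $\bA$-modules, so $\bA$ is representation infinite. If $\bB$ is wild, I compose a representation embedding $\aK\gnr{x,y}\md\to\bB\md$ with $\bF$; apart from at most finitely many objects its image avoids $\oB_i,\oB_j$, and on the rest $\bF$ is exact, indecomposability-preserving and isomorphism-reflecting, so the composite is again a representation embedding and $\bA$ is wild.

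For the blowing-up case the indecomposability statement is quick. The functor $\bG$ of Proposition~\ref{embed} satisfies $\bG\bF M\simeq M$, so a nontrivial splitting $\bF M=N_1\+N_2$ would give $M\simeq\bG N_1\+\bG N_2$, forcing $\bG N_2=0$ whenever $M$ is indecomposable. But $\bG N_2=0$ means $N_2$ is supported only at the vertex $i''$, and since the data of $\bF M$ at $i'$ and $i''$ are identical copies of $M(i)$, such a summand corresponds exactly to a $\oB_i$-summand of $M$; this is excluded as soon as $M$ is indecomposable with $M\not\simeq\oB_i$.

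The gluing case is the main obstacle, since there is no global retraction and I must argue through idempotents. Given an idempotent $e\in\End_\bA(\bF M)$ I write its component $e(ij)$ as a $2\xx2$ block matrix $(\phi_{st})$ along $\bF M(ij)=M(i)\+M(j)$ and invoke the relations recorded in the proof of Proposition~\ref{embed}: $\phi_{21}$ annihilates $\sum_{\be^+=i}\im M(\be)$ and has image inside $\bap_{\al^-=j}\Ker M(\al)$, and symmetrically for $\phi_{12}$. The hypothesis that $M$ has no $\oB_i$- or $\oB_j$-summand yields $\bap_{\al^-=i}\Ker M(\al)\sbe\sum_{\be^+=i}\im M(\be)$ together with its analogue at $j$, which force $\phi_{12}\phi_{21}=\phi_{21}\phi_{12}=0$. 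Reading off the diagonal blocks of $e(ij)^2=e(ij)$ then shows $\phi_{11},\phi_{22}$ are idempotent, and the four remaining relations assemble $\tilde e(i)=\phi_{11}$, $\tilde e(j)=\phi_{22}$, $\tilde e(k)=e(k)$ into a genuine idempotent of $M$. As $M$ is indecomposable, $\End_\bB(M)$ is local and $\tilde e\in\set{0,1}$; a short chase, treating $e$ and $1-e$ symmetrically and using the vanishing of the off-diagonal products, upgrades this to $e\in\set{0,1}$, so $\bF M$ is indecomposable.

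Two routine points remain to be dispatched. First I should confirm the equivalence ``$M$ has no $\oB_i$-summand \iff $\bap_{\al^-=i}\Ker M(\al)\sbe\sum_{\be^+=i}\im M(\be)$'', which is a direct unwinding of what a simple direct summand at $i$ is; it is what licenses the displayed inclusions above. Second, exactly as in Proposition~\ref{embed}, I work under the assumption that there are no loops and no arrows between the glued vertices, which is the only configuration needed for algebras of type $\rA$; the remaining cases are analogous but notationally heavier and are not required here. With these in place the corollary follows from the indecomposability statement as described in the first paragraph.
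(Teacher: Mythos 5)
Your proof is correct, and it follows the same basic route the paper intends: everything is deduced from the exact functor $\bF\colon\bB\md\to\bA\md$ of Proposition~\ref{embed}. The difference is that the paper states Corollary~\ref{improve} with no proof at all, treating it as immediate from the fact that $\bF$ reflects isomorphisms (up to trivial summands at the affected vertices), whereas you correctly observe that isomorphism reflection alone does not transport representation type --- infinitely many pairwise non-isomorphic modules need not force representation infiniteness, and a representation embedding must also preserve indecomposability --- and you close this gap by proving that $\bF M$ is indecomposable whenever $M$ is indecomposable and not one of the excluded simples $\oB_i,\oB_j$. Both of your arguments check out: in the blowing-up case the retraction $\bG\bF M\simeq M$ together with the identification of simple summands at $i''$ with $\oB_i$-summands of $M$ does the job, and in the gluing case the block computation with an idempotent $e(ij)=(\phi_{st})$ is sound --- the inclusions $\im\phi_{12}\sbe\bap_{\al^-=i}\Ker M(\al)\sbe\sum_{\be^+=i}\im M(\be)\sbe\Ker\phi_{21}$ (and symmetrically) give $\phi_{12}\phi_{21}=\phi_{21}\phi_{12}=0$, so the diagonal blocks form an idempotent of the local ring $\End_\bB M$, and the case analysis on $\tilde e\in\{0,1\}$ forces $e\in\{0,1\}$. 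The auxiliary fact that absence of an $\oB_i$-summand is equivalent to $\bap_{\al^-=i}\Ker M(\al)\sbe\sum_{\be^+=i}\im M(\be)$ is exactly the one the paper itself invokes inside the proof of Proposition~\ref{embed}, so you are not assuming anything the paper does not. In short: same skeleton, but you supply a genuine strengthening of Proposition~\ref{embed} (preservation of indecomposability) that the published text leaves implicit and that is in fact needed for the corollary as stated; the alternative would be to invoke the second Brauer--Thrall theorem plus a dimension count, which your argument avoids.
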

 
\begin{proof}[Proof of Theorem \ref{main}]
  We have already proved the ``if'' part of the theorem. So we only have to show that all other nodal
  algebras are wild. Moreover, we can suppose that there were no inessential gluings during the 
  construction of a nodal algebra $\bA$. As $\bA$ is neither gentle nor quasi-gentle, there must 
  be at least one exceptional gluing. Hence $\bA$ is obtained from an algebra $\bB$ of the form 
  \eqref{case1} or \eqref{case2} by some additional gluings (not inessential) or blowings up. 
  One easily sees that any blowing up of $\bB$ gives a wild algebra. Indeed, the crucial 
  case is when $n=1$, $m=l=0$ and we blow up the end of the arrow $\be$. Then, after reducing 
  $\al_1$ and $\ga$, just as in the proof of Theorem~\ref{except}, we obtain for the non-zero 
  parts of the two arrows obtained from $\be$ the problem of the pair of posets $(1,1)$ and 
  $\dS_1$ (see page \pageref{ds}), which is wild by \cite[Theorem 2.3]{kl1}. The other cases are 
  even easier. 
  
  Thus no blowing up has been used. Suppose that we glue the ends of $\be$ (or some $\be_k$)
  and $\ga$ (or some $\ga_k$). Then, even for $n=1,\,m=l=0$, we obtain the algebra
  \[
   \xymatrix{ \ccdot \ar@(dl,ul)[]^{\al} \ar@/^/[rr]^\be \ar@/_/[rr]_\ga && \ccdot }\quad
    \al^2=\be\al=0
  \]
  (or its dual). Reducing $\al$, we obtain two matrices of the forms
  \begin{align*}
    \be=\mtr{0&B_2&B_3} \text{ and } \ga=\mtr{G_1&G_2&G_3}.
  \end{align*}
  Given another pair $(\be',\ga')$ of the same kind, its defines an isomorphic representation
  \iff there are invertible matrices $X$ and $Y$ such that $X\be=\be'Y$ and $X\ga=\ga'Y$,
  and $T$ is of the form
  \[
   Y=\mtr{Y_1&Y_3&Y_4\\0&Y_2&Y_5\\0&0&Y_1},  
  \]
  where the subdivision of $Y$ corresponds to that of $\be,\ga$. The Tits form of this matrix problem
  (see \cite{ds}) is $Q=x^2+2y_1^2+y_2^2+2y_1y_2-3xy_1-2xy_2$. As $Q(2,1,1)=-1$, this matrix problem
  is wild. Hence the algebra $\bA$ is also wild. Analogously, one can see that if we glue ends of some
  of $\be_i$ or $\ga_i$, we get a wild algebra (whenever this gluing is not inessential). Gluing of an 
  end of some $\al_i$ with an end of $\be$ or $\ga$ gives a wild quiver algebra as a subalgebra
  (again if it is not inessential). Just the same is in the case when we glue ends of some $\al_i$
  so that this gluing is not inessential and $n>3$. It accomplishes the proof.
 \end{proof}

\end{document}